\newtheorem{Thm}{Theorem}[section]
\newtheorem{rem}{Remark}[section]
\newtheorem{Cor}{Corollary}[section]
\newtheorem{Lem}{Lemma}[section]
\newcommand{\R}{\mathbb R}
\newcommand{\C}{\mathbb C}
\newcommand{\T}{\mathbb T}
\newcommand{\N}{\mathbb N}
\newcommand{\Z}{\mathbb Z}
\newcommand{\la}{\lambda}
\newcommand{\ra} {\rightarrow}
\newcommand{\txt}{\textmd}
\newcommand{\ds}{\displaystyle}
\newcommand{\be} {\begin{equation}}
	\newcommand{\ee} {\end{equation}}
\newcommand{\bes} {\begin{equation*}}
	\newcommand{\ees} {\end{equation*}}
\newcommand{\bea} {\begin{eqnarray}}
	\newcommand{\eea} {\end{eqnarray}}
\newcommand{\beas} {\begin{eqnarray*}}
	\newcommand{\eeas} {\end{eqnarray*}}
\date{}
\numberwithin{equation}{section}
\begin{document}
	
	\title[\tiny{Completeness of Exponentials and Beurling's Theorem on $\R^n$ and $\T^n$}]{Completeness of Exponentials and Beurling's Theorem \\
	on $\R^n$ and $\T^n$}
	
	\author{\tiny{Santanu Debnath and Suparna Sen}}
	
	\address{Department of Pure Mathematics, University of Calcutta, India.}
	
	\email{santanudebnath1804@gmail.com, suparna29@gmail.com}
	
	\thanks{The first author is supported by CSIR Senior Research Fellowship (Enrollment Id: 09/028(1002)/2017-EMR-I)}

	
	\begin{abstract}
		A classical result of Arne Beurling states that the Fourier transform of a nonzero complex Borel measure $\mu$ on the real line cannot vanish on a set of positive Lebesgue measure if $\mu$ has certain decay. We prove a several variable analogue of Beurling's theorem by exploring its connection with the well-known problem concerning the density of linear span of exponentials in a certain weighted normed linear space of continuous functions. In the process, we also prove some new results of this genre and establish an equivalence between the above two problems. We also obtain a generalisation of Beurling's theorem and prove these results on the $n$-dimensional torus $\T^n.$
	\end{abstract}
	
	\subjclass[2010]{Primary 22E30; Secondary 42A10, 42A65, 42B35, \textcolor{black}{46E27}}
	
	\keywords{Fourier Transform, Beurling's Theorem, Completeness of Exponentials}
	
	\maketitle
	
	\section{Introduction}
	
	Several classical results (referred as uncertainty principles in general) in harmonic analysis  deal with the phenomenon that a function on the real line and its Fourier transform cannot simultaneously be `small', for example if one decays rapidly at infinity then the other cannot vanish on a `large set' unless both vanish identically. As a manifestation of this fact, we note the following: if a function $f$ on $\R$ satisfies the estimate 
	$$ |f(x)| \leq e^{-|x|}, \quad \txt{ for all } x \in \R,$$
	and its Fourier transform $\widehat{f}$ vanishes on a set of positive Lebesgue measure, then $f$ is zero almost everywhere. In fact, here $\widehat{f}$ turns out to be holomorphic on a domain containing the real line because of the rapid decay on $f.$ The same conclusion holds if we interchange the conditions on $f \in L^1(\R)$ and $\widehat{f}$ due to the fact that $\R$ is its own unitary dual. So it is natural to study the interrelation of the optimal nature of the set on which one vanishes and the allowable decay of the other. 
	
	Plenty of classical literature are available on this, mainly due to Ingham \cite{I}, Paley-Wiener \cite{PW}, Levinson \cite{L1} and Beurling \cite{B} which studied results of this kind on the real line $\R.$ Among these results, the most general one (in terms of smallness of vanishing set) is due to Beurling \cite{B, K} \textcolor{black}{for complex (necessarily finite) Borel measures. We define the Fourier transform $\widehat{\mu}$ of a complex measure $\mu$ on $\R$ (in the classical sense) as a continuous function by
	$$\widehat{\mu}(\la)=\int_{\R} e^{-i\la t}d\mu(t), \text{ for } \la \in \R.$$}
	\begin{Thm}[Beurling]\label{ber-th}
		Let $\mu$ be a complex \textcolor{black}{(finite)} Borel measure on $\R$ such that 
		\begin{equation}\label{eq;1}
			\int_0^{\infty}\frac{\textcolor{black}{-\log|\mu|([x,\infty))}}{1+x^2}dx=\textcolor{black}{+}\infty,
		\end{equation}
 where $|\mu|$ is the total variation of the measure $\mu$. If the Fourier transform $\widehat{\mu}$ of $\mu$ vanishes on a set $\Lambda$ of positive Lebesgue measure in $\R,$ then $\mu $ is identically zero.
	\end{Thm}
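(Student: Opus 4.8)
The plan is to turn the decay of $\mu$ into analyticity of two pieces of $\widehat\mu$, in the spirit of Paley and Wiener, and then to recognise the hypothesis on $\Phi(x):=\int_x^\infty|d\mu(t)|$ as precisely the Denjoy--Carleman condition making a naturally associated Carleman class quasi-analytic. Write $\C_+$ and $\C_-$ for the open upper and lower half-planes, and use the convention $\widehat\mu(\xi)=\int_\R e^{-i\xi t}\,d\mu(t)$. Assume $\mu\neq 0$; I aim for a contradiction. Split $\mu=\mu_++\mu_-$ into its restrictions to $[0,\infty)$ and $(-\infty,0)$. Since $\mu_-$ is a finite measure carried by $(-\infty,0]$, its Fourier transform extends to a bounded holomorphic function $H$ on $\C_+$, continuous on the closure with $\|H\|_\infty\le\|\mu\|$; symmetrically $\widehat{\mu_+}$ extends to a bounded holomorphic $G$ on $\C_-$. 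If $\mu_+=0$ then $\widehat\mu=H$ vanishes on the set $\Lambda$ of positive measure, hence $H\equiv 0$ by the boundary uniqueness theorem for the half-plane (the F.~and M.~Riesz / Lusin--Privalov theorem, transferred from the disc by the Cayley map), so $\mu=0$, a contradiction; the case $\mu_-=0$ is symmetric. Thus we may assume $G\not\equiv 0$ and $H\not\equiv 0$, while $\widehat\mu=G+H$ vanishes on $\Lambda$.

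I now pass from the tail bound to quasi-analyticity. Since $\Phi$ is non-increasing, divergence of $\int_0^\infty(1+x^2)^{-1}\log(1/\Phi(x))\,dx$ forces $\Phi(x)$ to tend to zero faster than every power of $x$; hence every moment $m_n:=\int_0^\infty t^n\,d|\mu_+|(t)=n\int_0^\infty t^{n-1}\Phi(t)\,dt$ is finite, $G\in C^\infty(\R)$, and $|G^{(n)}(\xi)|\le m_n$ for all $\xi\in\R$ and $n\ge 0$. Therefore the boundary function of $G$ lies in the Carleman class $\mathcal C\{m_n\}$, and the hypothesis on $\Phi$ is precisely the classical criterion (Beurling; see \cite{B, K}) for $\mathcal C\{m_n\}$ to be quasi-analytic: a function in it, all of whose derivatives vanish at some point of $\R$, is identically zero. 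The somewhat technical point here is the quantitative passage between the logarithmic integral in the hypothesis and the Denjoy--Carleman sum, effected through the associated function $T(r)=\sup_{n\ge 0}r^n/m_n$.

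It remains to run the uniqueness argument, which is the heart of the matter. Let $\xi_0$ be a point of Lebesgue density of $\Lambda$. Because $G$ and $H$ are non-zero elements of $H^\infty$ of their respective half-planes, the logarithmic integrals $\int_\R(1+\xi^2)^{-1}\log|G(\xi)|\,d\xi$ and $\int_\R(1+\xi^2)^{-1}\log|H(\xi)|\,d\xi$ are both finite; combining this with the fact that on $\Lambda$ the boundary values of $H$ agree with the smooth, quasi-analytic function $-G$, one invokes the reflection and uniqueness theory for functions of bounded characteristic in a half-plane to upgrade the regularity of $H$ near $\xi_0$, so that $\widehat\mu=G+H$ belongs, in a neighbourhood of $\xi_0$, to the quasi-analytic class $\mathcal C\{m_n\}$. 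A standard Taylor-expansion argument at the density point $\xi_0$ then shows that $\widehat\mu$ and all its derivatives vanish there, and quasi-analyticity forces $\widehat\mu\equiv 0$, hence $\mu=0$ --- the contradiction sought. I expect this last step to be the principal obstacle: passing from ``$G$ and $H$ merely agree on a measurable set of positive measure'' to genuine rigidity of $\widehat\mu$, which is exactly where the full force of the decay hypothesis, rather than the mere finiteness of $\mu$, has to be exploited.
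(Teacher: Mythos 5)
The paper offers no proof of Theorem \ref{ber-th} --- it is quoted as a classical result with references to \cite{B} and \cite{K} --- so your attempt can only be judged on its own terms. Your overall shape (split $\mu=\mu_++\mu_-$, view $\widehat{\mu_-}$ as a bounded holomorphic function on a half-plane, exploit the tail decay of $\mu_+$ on the real line) is indeed the right starting point, but there are two genuine gaps. The first is the reduction to a Carleman class: it is \emph{not} true that divergence of $\int_0^\infty(1+x^2)^{-1}\log(1/\Phi(x))\,dx$ forces $\Phi$ to decay faster than every power, nor that the moments $m_n$ are finite. Take $x_1=10$, $x_{k+1}=e^{x_k}$, and $\mu_+=\sum_k\bigl(x_k^{-1}-x_{k+1}^{-1}\bigr)\delta_{x_k}$. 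Then $\Phi(x)=x_j^{-1}$ on $(x_{j-1},x_j]$, so the $j$-th block contributes about $\log x_j\cdot\bigl(\arctan x_j-\arctan x_{j-1}\bigr)\approx x_{j-1}\cdot x_{j-1}^{-1}=1$ and the logarithmic integral diverges; yet $x_j\Phi(x_j)=1$ for all $j$ and $m_1=\sum_k x_k\bigl(x_k^{-1}-x_{k+1}^{-1}\bigr)=\infty$. Adding $\delta_{-1}$ to make $\mu_-$ nontrivial (so that you cannot fall back on your degenerate case), you get a measure satisfying every hypothesis of the theorem for which $G=\widehat{\mu_+}$ is not even $C^1$, so there is no class $\mathcal{C}\{m_n\}$ to invoke. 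This is not a repairable technicality: in the variable $u=\log x$ the convex minorant of $\log(1/\Phi)$ in this example is essentially $u$, whose logarithmic integral converges, so Beurling's tail condition is strictly weaker than any Denjoy--Carleman condition on a moment sequence, and the theorem genuinely cannot be obtained by reducing it to quasi-analyticity of $\mathcal{C}\{m_n\}$.

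The second gap is the step you yourself flag as the principal obstacle, and it is the heart of the theorem, not a finishing detail. From ``$H\in H^\infty(\C_+)$ and its boundary values agree with the smooth function $-G$ on a set of positive measure'' you propose to ``upgrade the regularity of $H$ near a density point $\xi_0$'' so that $G+H$ lies in a quasi-analytic class near $\xi_0$. No such reflection or uniqueness principle exists: boundedness of $H$ together with agreement with a smooth function on a merely measurable set yields no differentiability of $H$ at any real point, and $\widehat\mu=G+H$ is in general only continuous, since $\mu_-$ carries no decay hypothesis at all. The classical arguments get around this by entirely different means (harmonic estimation and logarithmic-integral bounds for $\log|H|$ played against the tail condition on $\Phi$, or the dual completeness-of-exponentials formulation in \cite{K}), not by restoring smoothness to $H$. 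As written, the proposal therefore does not close; the two steps you describe as ``somewhat technical'' and ``the principal obstacle'' are, respectively, false as stated and unproved.
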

\textcolor{black}{ \begin{rem}
Since $\widehat{\mu}$ is continuous, $\widehat{\mu}$ vanishes on $\Lambda$ if and only if it vanishes on $\overline{\Lambda}$ as well. So, we will henceforth assume that the vanishing set $\Lambda$ is closed.  
	\end{rem} }
 
	This was proved in \cite{B} as a consequence of a characterization of the Beurling quasianalytic class (a generalisation of the famous Denjoy-Carleman quasianalytic class and Bernstein quasianalytic class) using the concept of harmonic measure. A more direct proof using similar ideas is available in \cite{K}. Moreover, there is another quasianalyticity result by Beurling where the proof uses similar method but involves more technical dificulties (see \cite{B, K}), from which the following result is obtained:
 	\begin{Thm}[Beurling]\label{ber-th-c}
		Let $f\in L^2(\T)$ satisfy
		\begin{equation}\label{eq;7c}
			\sum_{k=0}^\infty\dfrac{1}{1+k^2} \log\left(\dfrac{1}{\sum_{m=k}^\infty |\widehat{f}(m)|^2}\right)=\infty,
		\end{equation}
		where $\widehat{f}(k)$ denote the Fourier coefficients of $f.$ If $f$ vanishes on a set of positive Lebesgue measure in $\T,$ then $f$ is zero almost everywhere.
	\end{Thm}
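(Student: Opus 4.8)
The plan is to derive Theorem~\ref{ber-th-c} from the Euclidean Beurling theorem (Theorem~\ref{ber-th}) by transferring the hypothesis on $\T$ to a hypothesis on $\R$ via a suitable lift, rather than attempting a direct argument on the circle. Concretely, given $f \in L^2(\T)$ as in the statement, I would associate to it a complex Borel measure $\mu$ on $\R$ whose Fourier transform encodes the Fourier coefficients of $f$. The natural candidate is the discrete measure $\mu = \sum_{k \in \Z} \widehat{f}(k)\, \delta_k$ supported on the integers; its Fourier transform (in the appropriate normalization) is the $2\pi$-periodic function $\xi \mapsto \sum_k \widehat{f}(k) e^{ik\xi}$, which is precisely $f$ viewed on $\R$. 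The finiteness of $|\mu|(\R) = \sum_k |\widehat{f}(k)|$ is \emph{not} guaranteed by $f \in L^2$ alone, so a first step is to observe that the summability hypothesis forces $f$ to have a bit more regularity; I would either argue that the log-integral condition already implies absolute summability of the Fourier coefficients on a suitable subsequence, or (more robustly) split off finitely many frequencies and work with the tail, which is what is really controlled.

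The key steps, in order, are: (i) convert the discrete sum condition $\sum_{k=0}^\infty (1+k^2)^{-1}\log\big(1/\sum_{m \geq k}|\widehat{f}(m)|^2\big) = \infty$ into the integral condition $\int_0^\infty (1+x^2)^{-1}\log\big(1/\int_x^\infty |d\mu(t)|\big)\,dx = \infty$ required by Theorem~\ref{ber-th}; this is a comparison-of-sum-and-integral estimate, using that on $[k,k+1]$ the integrand is comparable to its value at $k$, together with monotonicity of the tail masses and the elementary inequality relating $\sum_{m \geq k}|\widehat f(m)|$ to $\big(\sum_{m\geq k}|\widehat f(m)|^2\big)^{1/2}$ — here one must be slightly careful, since passing from the $\ell^2$ tail to the $\ell^1$ tail goes the ``wrong way'', so I would instead phrase $\mu$'s tail directly in terms of the $\ell^2$ tails, or absorb the discrepancy into the $\log$ where it only helps. (ii) Note that $f$ vanishing on a set $E \subset \T$ of positive measure means the periodic function $\widehat\mu$ vanishes on the lift of $E$, which has positive Lebesgue measure in $\R$ (indeed on each translate $E + 2\pi n$). (iii) Apply Theorem~\ref{ber-th} to conclude $\mu = 0$, i.e.\ $\widehat f(k) = 0$ for all $k$, hence $f = 0$ a.e.\ on $\T$.

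The main obstacle I anticipate is step (i), specifically the mismatch between the $\ell^2$-type tail $\sum_{m \geq k} |\widehat f(m)|^2$ appearing naturally for $f \in L^2(\T)$ and the $\ell^1$-type (total variation) tail $\int_x^\infty |d\mu(t)| = \sum_{k \geq x} |\widehat f(k)|$ that Theorem~\ref{ber-th} demands. Since $\big(\sum_{m \geq k}|\widehat f(m)|\big)$ can be much larger than $\big(\sum_{m \geq k}|\widehat f(m)|^2\big)^{1/2}$, the divergence of the log-integral for the $\ell^2$ tails does not immediately give divergence for the $\ell^1$ tails. I expect this is resolved by choosing a different auxiliary object on $\R$: instead of $\mu = \sum \widehat f(k)\delta_k$, one can take the measure (or $L^1$ function) whose Fourier transform is $f$ but which is built from the $\ell^2$ data — for instance by convolving, or by realizing $\sum_{m\geq k}|\widehat f(m)|^2$ itself as (the tail of) the total variation of a measure related to $f * \tilde f$ where $\tilde f(x) = \overline{f(-x)}$, whose $k$-th Fourier coefficient is $|\widehat f(k)|^2 \geq 0$. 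Then the hypothesis is exactly a Beurling-type condition for that nonnegative-coefficient measure, and one applies Theorem~\ref{ber-th} to $f * \tilde f$ (which vanishes on a translate of $E \cap (-E)$, still of positive measure after a further averaging argument, or one uses that $f*\tilde f$ near $0$ controls $f$). Handling this reduction cleanly — ensuring the vanishing set survives convolution — is the delicate point, and I would spend most of the write-up there.
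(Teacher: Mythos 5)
A preliminary remark: the paper does not prove Theorem \ref{ber-th-c} at all --- it is quoted from \cite{B, K} as a known companion of Theorem \ref{ber-th} --- so your proposal has to stand on its own. As written it does not: the step you yourself flag as ``the delicate point'' is a genuine, unresolved gap, and both devices you offer for closing it fail.

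Theorem \ref{ber-th} needs two inputs from a measure $\mu$ on $\R$: a total-variation ($\ell^1$-type) tail condition, and vanishing of $\widehat{\mu}$ on a set of positive measure. Each of your two lifts secures one input and loses the other. For $\mu=\sum_k\widehat f(k)\delta_k$ the zero set transfers correctly (the periodization of $E$ has positive measure in $\R$), but $\int_x^\infty|d\mu(t)|=\sum_{k\ge x}|\widehat f(k)|\ge\big(\sum_{k\ge x}|\widehat f(k)|^2\big)^{1/2}$, so the inequality between the $\ell^1$ and $\ell^2$ tails goes the wrong way and the hypothesis yields no lower bound on $\log\big(1/\sum_{k\ge x}|\widehat f(k)|\big)$. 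Worse, one can arrange decreasing tails $\rho_k=\sum_{m\ge k}|\widehat f(m)|^2$ (dropping from $e^{-k_j}$ to $e^{-k_{j+1}}$ in equal decrements across blocks $[k_j,k_{j+1})$ with $k_{j+1}=e^{2k_j}$) for which $\sum_k(1+k^2)^{-1}\log(1/\rho_k)=\infty$ and yet $\sum_k|\widehat f(k)|=\infty$; then $\mu$ is not even a finite Borel measure, and discarding finitely many frequencies cannot repair an infinite tail. For the autocorrelation $g=f*\tilde f$, the coefficients $|\widehat f(k)|^2\ge0$ have exactly the right tails, but $g$ does not inherit the zero set: $g(0)=\|f\|_2^2>0$, and there is no reason for $g$ to vanish anywhere. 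The parenthetical claim that $f*\tilde f$ vanishes on a translate of $E\cap(-E)$ is false, since convolution does not preserve zero sets. The variants that do preserve the zero set fare no better: the coefficients of $|f|^2$ involve the completely uncontrolled negative-frequency part of $\widehat f$, and for $f^2$ one only gets the pointwise bound $|\widehat{f^2}(k)|\le 2\|f\|_2\,\rho_{\lceil k/2\rceil}^{1/2}$, which does not control the $\ell^1$ tails $\sum_{j\ge k}|\widehat{f^2}(j)|$ that Theorem \ref{ber-th} demands. So no object in your proposal simultaneously carries absolutely summable, rapidly decaying one-sided Fourier data and a zero set of positive measure, and the reduction does not close. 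This is consistent with the sources: the circle version in \cite{B, K} is not obtained by feeding a lifted measure into the line theorem, but by running the harmonic-estimation/outer-function argument directly on the disc, where the one-sided $\ell^2$ tail enters as the $H^2$ norm of $\sum_{m\ge k}\widehat f(m)z^m$ rather than as a total-variation bound.
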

	
	Recently there has been a plethora of results proving analogues of the ones due to Ingham, Levinson and Paley-Wiener for several Lie groups and homogeneous spaces including the $n$-dimensional Euclidean space, $n$-dimensional torus, two step nilpotent Lie groups, Euclidean motion group, Riemannian symmetric spaces etc (see \cite{Bh2, Bh, BPP, BP, BR, BRS, BS1, BS2, GT, GT3}). However, analogues of the result due to Beurling (Theorem \ref{ber-th}) remained largely unexplored except few results regarding some integral transforms (proved using Beuring's original idea) in a very recent article \cite{DS1}. In fact, Beurling's result is a straightforward generalisation of Levinson's result, which considers functions vanishing on open sets instead of sets of positive Lebesgue measure. 
	
	We recall that the approach to the extension of Levinson's theorem on $\R^n$ (see \cite{BRS}) and some other noncommutative setting (see \cite{BR}) relies heavily on an alternative proof suggested in \cite{K} where it was shown that the theorem of Levinson can also be obtained as a consequence of completeness of linear span of exponentials in certain normed linear \textcolor{black}{spaces} of continuous functions. Several versions of the problem regarding completeness of exponentials and its multiple reformulations have been studied throughout the last century by many prominent mathematicians including Akhiezer, Bernstein, de Branges, Krein, Koosis, Levinson, Mergelyan and many others. This is still a very active area of research with more recent significant contributions by Bakan \cite{Ba}, de Jeu \cite{DJ}, Poltoratski \cite{P1,P} among others. Besides the inherent beauty of the original problem, such an extensive interest is largely because of its numerous links with other areas of classical analysis such as spectral problems for differential operators, gap and density problems, type problem etc. We note that Beurling's result (as restated in \textcolor{black}{Lemma} \ref{psi-ber}) can be considered as a direct generalisation of de Branges' Gap Theorem (see \textcolor{black}{\cite[Theorem 63]{dB}, \cite[Theorem 9]{P1}}). 
	
	In the study of exponential density problem on $\R^n,$ we start with a weight function $\psi:[0,\infty) \rightarrow [0,\infty)$ such that $\psi(x)\to \infty$ as $x\to \infty.$ We then define a weighted space of continuous functions $$C_{\psi}(\R^n)=\left\lbrace f : \R^n \to \C ~ : ~ f \txt{ is continuous and } \lim_{|x|\to \infty}\dfrac{f(x)}{e^{\psi(|x|)}}=0\right\rbrace,$$
	equipped with the weighted uniform norm 
	$$\|f\|_{\psi}=\sup_{x \in \R^n}\dfrac{|f(x)|}{e^{\psi(|x|)}},\quad \txt{ for } f\in C_{\psi}(\R^n).$$ For $\Lambda \subset \R^n,$ we consider the linear span of the exponential functions given by 
	$$\Phi_{\Lambda}(\R^n)= span \lbrace e_{\la}:\la \in \Lambda \rbrace,$$ where $e_{\la}(x)=e^{-i\la \cdot x}$ for $x \in \R^n.$ A version of the exponential density problem deals with the conditions on the set $\Lambda\subset \R^n$ and \textcolor{black}{the} weight $\psi,$ for which the space of exponentials $\Phi_{\Lambda}(\R^n)$ is dense in $C_{\psi}(\R^n).$ Another version of this problem is to study density using $L^p$ norms with respect to \textcolor{black}{$|\mu|,$ for a complex (finite) measure $\mu.$} In this version, one studies conditions on $\Lambda\subset \R^n$ and $\mu$ that ensure completeness, that is, density of $\Phi_{\Lambda}(\R^n)$ in $L^p(\R^n,\textcolor{black}{|}\mu\textcolor{black}{|}).$ For some results of this type, see  \cite{Ba, DJ, K, P}. In most of these results, $\Lambda$ is considered to be an open set. 
	
	In this paper, we have been able to prove a stronger (than those available in literature) exponential density result (Theorem \ref{density}) for a \textcolor{black}{closed set $\Lambda$} of positive Lebesgue measure in $\R,$ as a consequence of Beurling's result on $\R$ (Theorem \ref{ber-th}). This also enables us to prove analogous results for completeness of exponentials on $\R^n$ (Theorem \ref{several density}, Theorem \ref{Lp dense}) using standard techniques. Thus we are able to prove a several variable analogue of Beurling's Theorem on $\R^n$ (Theorem \ref{several beurling}). Hence we can establish the following equivalence between the above two problems of different genre, namely the density of exponentials and Beurling's theorem, via the integrability condition on $\psi.$ 

	\begin{Thm}\label{equiv}
		Let $\psi:[0,\infty)\rightarrow [0,\infty)$ be a continuous increasing function such that $\psi(x)\to \infty $ as $x\to \infty.$ Then the following are equivalent:
		\begin{itemize}
			\item[(1)] $\displaystyle \int_0^\infty \dfrac{\psi(x)}{1+x^2}dx =\infty.$
			\item[(2)] $\Phi_\Lambda(\R^n)$ is dense in $(C_\psi(\R^n),|\cdot\|_\psi),$ \textcolor{black}{if $\Lambda \subset \R^n$ is a product of one dimensional closed sets of positive Lebesgue measure.}
           \textcolor{black}{\item[(3)] If $\mu $ is a complex Borel measure on $\R^n$ such that 
			\begin{equation*}
			\int_{\R^n}e^{\psi(|x|)}d|\mu|(x)<\infty,
			\end{equation*}
			and $\widehat{\mu}$ vanishes on a product of one dimensional
closed sets of positive Lebesgue measure, then $\mu$ is identically zero.}
		\end{itemize}
	\end{Thm}
	
	Furthermore, as a consequence of \textcolor{black}{Theorem \ref{several beurling}}, we derive an unusual uncertainty principle (\textcolor{black}{Corollary} \ref{Beurling lp}), which turns out to be a generalisation of Beurling's theorem. In this context, we note that, Theorem \ref{several density} may be considered as a generalisation of both Theorem 4.13 (for the admissible space in Definition 4.11, 3(b)) of \cite{DJ} and Lemma 2.4 of \cite{BRS}. 
	
	Similar questions may arise in the context of the torus $\T^n.$ The standard technique (for example see \cite{BRS}) here is to reduce matters to the corresponding results on $\R^n.$ This reduction involves a convolution of the given function with a smooth function supported on a small compact set, which can be adjusted so that the vanishing set of the resulting convolution is also an open set. However, this convolution technique doesn't readily work in the case of functions whose vanishing sets are of positive Lebesgue measure. So we require to follow the technique of proof used in the case of $\R^n$ for $\T^n$ with some necessary modifications to obtain Theorem \ref{sev-den-c}. In fact, this also enables us to extend the result to the space of integrable functions (Theorem \ref{sev-fn-c}) from the space of square integrable functions in Theorem \ref{ber-th-c}.
	
	We will adopt the following notation and convention throughout this paper: 
	The Lebesgue measure of a set $E\subset \R$ is denoted by $m(E)$ but for convenience $dx$ will be used instead of $dm(x).$ For a Banach space $X$ endowed with a norm $\|\cdot\|,$ the dual of $X$ is defined to be the set of all bounded linear functionals on $X$ denoted by $(X,\|\cdot\|)^*.$ The space of all smooth functions on $X$ is denoted by $C^\infty(X)$ and $C_c^\infty(X)$ denotes the space of all smooth, compactly supported functions on $X.$ The open ball of radius $l>0$ centered at $a$ in $\R^n$ is denoted by $B(a,l).$ For $x, y \in \R^n,$ $|x|$ is used to denote the Euclidean norm of the vector $x$ and $x \cdot y$ to denote the Euclidean inner product of the vectors $x$ and $y.$
	
	\section{On the Euclidean space $\R^n$}
	This section is broadly divided into two subsections. In the first subsection our aim is to prove some results on density of exponentials in certain Banach spaces of functions on $\R^n$ (Theorem \ref{several density} and Theorem \ref{Lp dense}). We will use Beurling's result on $\R$ (Theorem \ref{ber-th}) for proving these results. In the second subsection, we will use the exponential density result from the first subsection to prove a several variable analogue (Theorem \ref{several beurling}). \textcolor{black}{Finally, we will apply Theorem \ref{several beurling} to prove a generalisation (\textcolor{black}{Corollary}  \ref{Beurling lp}) of Beurling's result.} 
	
	First we will restate Beurling's result in a way similar to Theorem 1.1 of \cite{BRS}. 
	\textcolor{black}{\begin{Lem} \label{psi-ber}
		Let $\psi:[0,\infty)\rightarrow [0,\infty)$ be an increasing function such that
  \begin{equation}\label{psi cond}
			\int_0^{\infty}\dfrac{\psi(x)}{1+x^2}dx=\infty.
		\end{equation}  
  If $\mu$ is a complex (finite) Borel measure on $\R$ such that
		\begin{equation}\label{mu decay}
			\int_{0}^\infty e^{\psi(x)}d|\mu| (x)<\infty,
		\end{equation}
		then $\mu$ satisfies (\ref{eq;1}), in particular Theorem \ref{ber-th} holds for $\mu.$ 
	\end{Lem}}
    \begin{proof} Since  $\psi$ is increasing on $[0,\infty)$ we have for all $x\geq 0,$
		$$e^{\psi(x)}\int_x^\infty d\textcolor{black}{|}\mu\textcolor{black}{|}(t) \leq \int_{0}^\infty e^{\psi(t)}d\textcolor{black}{|}\mu\textcolor{black}{|}(t).$$
		From (\ref{mu decay}), it follows that for all $x \geq 0,$ there exists a constant $C>0$ such that 
		$$\int_x^{\infty} d\textcolor{black}{|}\mu\textcolor{black}{|}(t) \leq Ce^{-\psi (x)},$$
		which implies that
	$$\int_0^{\infty}\dfrac{1}{1+x^2}\log\left(\dfrac{1}{\int_x^{\infty}d\textcolor{black}{|}\mu\textcolor{black}{|}(t)}\right)dx \geq C\int_0^{\infty}\dfrac{\psi(x)}{1+x^2}dx.$$
		So by equation (\ref{psi cond}), we conclude that \bes\int_0^{\infty}\dfrac{1}{1+x^2}\log\left(\dfrac{1}{\int_x^{\infty}d\textcolor{black}{|}\mu\textcolor{black}{|}(t)}\right)dx=\infty.\ees
  \textcolor{black}{So $\mu$ satisfies  (\ref{eq;1}). Hence Theorem \ref{ber-th} holds for $\mu.$ }
  \end{proof}

\subsection{Density of Exponentials}
	
	We consider the space of exponentials $\Phi_{\Lambda}(\R^n)$ and the weighted space of continuous functions $C_\psi(\R^n)$ defined in the introduction. Since we assume that $\psi(x)\to \infty $ as $x\to \infty,$ it is easy to see that for any $\Lambda \subset \R^n,$ $\Phi_{\Lambda}(\R^n)$ is a subspace of $C_{\psi}(\R^n).$ For a \textcolor{black}{complex (finite)} measure $\mu$ on $\R^n,$ $\Phi_{\Lambda}(\R^n)$ is also a subspace of $L^p(\R^n, \textcolor{black}{|}\mu\textcolor{black}{|}),$ the space of all $L^p$ functions on $\R^n$ with respect to the measure $\textcolor{black}{|}\mu\textcolor{black}{|}.$ We will prove two results regarding the density of $\Phi_{\Lambda}(\R^n)$ in $(C_{\psi}(\R^n),\|\cdot\|_\psi)$ (Theorem \ref{several density}) and in $L^p(\R^n,\textcolor{black}{|} \mu\textcolor{black}{|})$ (Theorem \ref{Lp dense}) in this subsection.
	
	We will calculate $(C_{\psi}(\R^n),\|\cdot \|_{\psi})^*$ explicitly for continuous $\psi$ in order to prove our result. Let $C_0(\R^n)$ denote the Banach space of all continuous functions on $\R^n$ vanishing at infinity with respect to the supremum norm $\|\cdot\|_{\infty}$ and $\mathcal{M}(\R^n)$ denote the collection of all complex Borel measures on $ \R^n,$ which is the dual of $(C_0( \R^n),\|\cdot\|_{\infty}) .$ \textcolor{black}{For $f \in C_0( \R^n),$ using the continuity of $\psi,$ we define a bijective linear isometry $A_\psi : (C_0( \R^n),\|\cdot\|_{\infty}) \ra (C_{\psi}( \R^n),\|\cdot\|_{\psi})$ by $A_\psi(f)(x) = f(x)e^{\psi(|x|)},$ for $x \in  \R^n.$ From this, we get a natural bijection between the respective dual spaces, which enables us to get the following lemma:}
	\begin{Lem}\label{dual lem}
		If $\psi$ is a non-negative continuous function on $[0,\infty),$ then $(C_{\psi}( \R^n),\|\cdot\|_{\psi})$ is a Banach space isometrically isomorphic to $(C_0( \R^n),\|\cdot\|_{\infty})$ and its dual is given by
		$$(C_{\psi}( \R^n),\|\cdot\|_{\psi})^*=\left\lbrace\beta \in \mathcal{M}( \R^n) : 
		\int_{ \R^n}e^{\psi(|x|)}d\textcolor{black}{|}\beta\textcolor{black}{|}(x)<\infty \right\rbrace .$$
	\end{Lem}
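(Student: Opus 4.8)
The plan is to reduce everything to the already-understood space $C_0(\R^n)$ via an explicit isometry, and then simply dualize. Define $T : C_\psi(\R^n) \to C_0(\R^n)$ by $(Tf)(x) = e^{-\psi(|x|)} f(x)$. Since $\psi$ is continuous on $[0,\infty)$, the weight $x \mapsto e^{-\psi(|x|)}$ is continuous, strictly positive, and (because $\psi \geq 0$) bounded by $1$ on $\R^n$; hence $Tf$ is continuous, and the defining property of $C_\psi(\R^n)$ says exactly that $Tf$ vanishes at infinity, so $T$ indeed maps into $C_0(\R^n)$. Linearity is immediate, and $\|Tf\|_\infty = \sup_{x \in \R^n} e^{-\psi(|x|)}|f(x)| = \|f\|_\psi$, so $T$ is an isometry. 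It is surjective because for $g \in C_0(\R^n)$ the function $f(x) = e^{\psi(|x|)} g(x)$ is continuous and satisfies $e^{-\psi(|x|)} f(x) = g(x) \to 0$, i.e. $f \in C_\psi(\R^n)$ with $Tf = g$. Thus $T$ is an isometric isomorphism, and since $(C_0(\R^n), \|\cdot\|_\infty)$ is complete, so is $(C_\psi(\R^n), \|\cdot\|_\psi)$.

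Next I would pass to adjoints: $T^* : (C_0(\R^n), \|\cdot\|_\infty)^* \to (C_\psi(\R^n), \|\cdot\|_\psi)^*$ is again an isometric isomorphism. Invoking the Riesz representation identification $(C_0(\R^n), \|\cdot\|_\infty)^* = \mathcal{M}(\R^n)$, for $\nu \in \mathcal{M}(\R^n)$ and $f \in C_\psi(\R^n)$ we get $(T^*\nu)(f) = \nu(Tf) = \int_{\R^n} e^{-\psi(|x|)} f(x)\, d\nu(x)$.

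The last step is to rewrite this pairing as an integral of $f$ itself against a measure. Given $\nu \in \mathcal{M}(\R^n)$, set $d\beta(x) = e^{-\psi(|x|)}\, d\nu(x)$; since the weight is positive, continuous and bounded, $\beta$ is a complex Borel measure with total variation $d|\beta|(x) = e^{-\psi(|x|)}\, d|\nu|(x)$, so $\int_{\R^n} e^{\psi(|x|)}\, d|\beta|(x) = \int_{\R^n} d|\nu|(x) = \|\nu\| < \infty$, and $(T^*\nu)(f) = \int_{\R^n} f\, d\beta$. Conversely, if $\beta \in \mathcal{M}(\R^n)$ satisfies $\int_{\R^n} e^{\psi(|x|)}\, d|\beta|(x) < \infty$, then $d\nu(x) = e^{\psi(|x|)}\, d\beta(x)$ is a finite complex Borel measure with $\|\nu\| = \int_{\R^n} e^{\psi(|x|)}\, d|\beta|(x)$, the functional $f \mapsto \int_{\R^n} f\, d\beta$ is well defined on $C_\psi(\R^n)$ (because $|f(x)| \leq \|f\|_\psi e^{\psi(|x|)}$), and it equals $T^*\nu$. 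This yields the stated description of the dual, with uniqueness of $\beta$ inherited from the uniqueness clause in the Riesz representation theorem, and with the functional norm equal to $\int_{\R^n} e^{\psi(|x|)}\, d|\beta|(x)$.

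I do not anticipate a genuine difficulty here. The only points requiring a little care are the verification that multiplying a complex Borel measure by the positive continuous bounded function $e^{-\psi(|\cdot|)}$ again produces a complex Borel measure whose total variation scales by the same factor, and keeping track of which measure appears in the final pairing: it must be $\beta$ (not $\nu$), so that the side condition reads $\int_{\R^n} e^{\psi(|x|)}\, d|\beta|(x) < \infty$, which is precisely the decay hypothesis in the form needed to feed into Beurling's theorem in the subsequent arguments.
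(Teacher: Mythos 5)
Your proposal is correct and follows essentially the same route as the paper: both establish the isometric isomorphism with $(C_0(\R^n),\|\cdot\|_\infty)$ via multiplication/division by the weight $e^{\psi(|\cdot|)}$, pass to the dual, invoke the Riesz representation $(C_0(\R^n),\|\cdot\|_\infty)^*=\mathcal{M}(\R^n)$, and absorb the factor $e^{-\psi(|\cdot|)}$ into the representing measure to obtain $\beta$ with $\int_{\R^n}e^{\psi(|x|)}\,|d\beta(x)|<\infty$. Your write-up is, if anything, slightly more careful about the total-variation identity and the uniqueness of $\beta$.
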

	\textcolor{black}{Now,} we will use \textcolor{black}{above lemma and Lemma} \ref{psi-ber} to prove the following weighted approximation result on $\R:$
	\begin{Thm}\label{density}
		Let $\psi:[0,\infty) \rightarrow [0,\infty)$ be a continuous, increasing function satisfying 
		\bes\int_0^{\infty}\dfrac{\psi(x)}{1+x^2}dx=\infty. \ees
		For any \textcolor{black}{closed set $\Lambda$ of positive Lebesgue measure in $\R,$ } $\Phi_{\Lambda}(\R)$ is dense in $(C_{\psi}(\R),\|\cdot \|_{\psi}).$
	\end{Thm}
	\begin{proof}
		Since $\Phi_{\Lambda}(\R)$ is a subspace of $C_{\psi}(\R),$ in order to prove the result, it is enough to show that for any bounded linear functional $T$ on $(C_{\psi}(\R),\|\cdot \|_{\psi}),$ if $T$ vanishes on the space $\Phi_{\Lambda}(\R),$ then $T$ is identically zero. Let us consider such a $T \in (C_{\psi}(\R),\|\cdot \|_{\psi})^*.$ From Lemma \ref{dual lem}, we get a complex  Borel measure $\beta$ on $\R$ such that
		$$T(f)=\int_\R f(t) ~ d\beta(t), \quad \txt{ for } f \in C_{\psi}(\R),$$
		where $\beta$ satisfies $$\int_{ \R}e^{\psi(|x|)}d\textcolor{black}{|}\beta\textcolor{black}{|}(x)<\infty.$$
		Since $T$ vanishes on the space $\Phi_{\Lambda}(\R),$ we get that  
		$$ \int_\R e^{-i\lambda t}d\beta(t)=0, \quad \quad ~\forall ~ \lambda \in \Lambda.$$
		This implies that the function $\hat{\beta}$ vanishes on the set $\Lambda \subset \R.$ So $\beta$ satisfies all the conditions of \textcolor{black}{Lemma} \ref{psi-ber}, from which it follows that $\beta$ is identically zero. Hence we conclude that $ T \equiv 0.$
	\end{proof}
	
	We wish to prove an analogue of the above result on $\R^n,$ in a way similar to Lemma $2.4$ in \cite{BRS} which involves open sets in $\R^n.$ We note that an open set $U\subseteq \R^n$ always contains a set of the form $U_1\times U_2 \times \cdots \times U_n$ where each $U_j \subseteq \R$ is open in $\R$ for $1\leq j \leq n.$ This is an important tool in the technique of the proof of Lemma $2.4$ in \cite{BRS}. However, this property does not hold for \textcolor{black}{closed} sets of positive Lebesgue measure in $\R^n$. For example in $\mathbb{R}^2$ consider the set 
	$$E=\lbrace (x,y)\in [0,1]\times [0,1] : x-y \in F \rbrace ,$$ where $F\subset [0,1]$ is a fat Cantor set, which is a Cantor-like set obtained by removing middle intervals of length $(\frac{1}{4})^n$ from $[0,1]$ for each $n$-th iteration. Since $F$ is a closed set of positive Lebesgue measure in $\R,$ it follows that $E$ is a closed set of positive Lebesgue measure in $\R^2.$ It is easy to see that $E$ cannot contain $\Lambda_1 \times \Lambda_2$ such that $\Lambda_1, \Lambda_2$  \textcolor{black}{are closed subsets of $\mathbb{R}$ with $ m(\Lambda_1), ~ m(\Lambda_2)>0.$ Otherwise, $\Lambda_1 - \Lambda_2$ must contain an interval,} which contradicts the definition of $E$ as $F$ contains no interval. It is due to this reason that we are led to assume the vanishing set of the function to be \textcolor{black}{a product of one dimensional closed sets of positive Lebesgue measure} in order to incorporate the above property. In the following result, we will prove that if $\Lambda \subset \R^n$ is of \textcolor{black}{the form $\Lambda = \Lambda_1 \times \cdots \times \Lambda_n,$ where $\Lambda_j$ is a closed subset of $\R$ for each $1 \leq j \leq n$ such that $m(\Lambda_j)>0$}, then the density of $\Phi_{\Lambda}(\R^n)$ in $(C_{\psi}(\R^n),\|\cdot \|_{\psi})$ is characterized by the given integral condition on $\psi:$  
	
	\begin{Thm}\label{several density}
		Let $\psi$ be a non-negative, continuous, increasing function on $[0,\infty)$ such that $\psi(x)\to \infty $ as $x\to \infty.$ The space $\Phi_{\Lambda}(\R^n)$ is dense in $(C_{\psi}(\R^n),\|\cdot \|_{\psi})$ for \textcolor{black}{$\Lambda = \Lambda_1 \times \cdots \times \Lambda_n,$ where $\Lambda_j$ is a closed subset of $\R$ for each $1 \leq j \leq n$ such that $m(\Lambda_j)>0$} if and only if 
		\be \label{psiint} \int_0^{\infty}\dfrac{\psi(x)}{1+x^2}dx=\infty. \ee
	\end{Thm}
	 
\begin{proof} 
		First, we will assume (\ref{psiint}) and prove the density of $\Phi_{\Lambda}(\R^n)$ in $(C_{\psi}(\R^n),\|\cdot \|_{\psi}).$
		Let us define, 
		$$\psi_0 (x)= \dfrac{\psi (x)}{n}, \quad \quad \txt{for } x\in [0,\infty).$$ 
		\textcolor{black}{We now consider the following space of functions
		\bes 
		\mathcal{P}C_{\psi_0}(\R^n) = \text{span} \left\{f:\R^n \to \C :  f(x_1, \cdots, x_n)=f_1(x_1)\cdots f_n(x_n), f_j\in C_{\psi_0}(\R), 1\leq j \leq n  \right\}. 
		\ees
		It was shown in \cite{BRS} that $ \mathcal{P}C_{\psi_0}(\R^n)$ is dense in $(C_\psi(\R^n),\|\cdot \|_\psi).$ 
		Since $\Phi_{\Lambda}(\R^n) \subseteq (C_{\psi}(\R^n),\|\cdot \|_{\psi}),$ in order to prove the result it is enough to prove that $\Phi_{\Lambda}(\R^n)$ is dense in $ (\mathcal{P}C_{\psi_0}(\R^n),\|\cdot \|_\psi).$ We consider functions in $\mathcal{P}C_{\psi_0}(\R^n)$ of the form $$f(x)= f_1(x_1)\cdots f_n(x_n),$$ for $f_j\in C_{\psi_0}(\R),$ for all $1\leq j \leq n.$ Since $m(\Lambda_j)>0$ for each $j,$ for any $0<\epsilon<1$ applying Theorem \ref{density} we get $g_j\in \Phi_{\Lambda_j}(\R)$ such that for all $1\leq j\leq n$
		\begin{equation*}
			\sup_{s\in \R}\frac{|f_j(s)- g_j(s)|}{e^{\psi_0(|s|)}}<\epsilon .
		\end{equation*}
		By triangle inequality, we have
		$$ \sup_{s\in \R} \dfrac{|g_j(s)|}{e^{\psi_0(|s|)}}\leq 1+\|f_j\|_{\psi_0}, \quad \txt{ for } 1\leq j \leq n. $$ 
		Since $e_{\lambda_1}(x_1)\cdots e_{\lambda_n}(x_n)=e_{\lambda}(x),$ for $\la = (\la_1, \cdots, \la_n) \in \Lambda_1 \times \cdots \times \Lambda_n$  and $x = (x_1, \cdots, x_n) \in \R^n,$ it easily follows that if $$g(x)= g_1(x_1)\cdots g_n(x_n),\quad \txt{ for }  x=(x_1, \cdots , x_n)\in \R^n,$$ then $g \in \Phi_\Lambda(\R^n).$} By defining $$g_0(y)=e^{\psi_0(|y|)}=f_{n+1}(y),~~ y\in \R,$$ we have for all $x=(x_1,\cdots , x_n)\in \R^n,$
		\begin{eqnarray*}
			\frac{|f(x)- g(x)|}{e^{\psi(|x|)}} &\leq& \frac{|f_1(x_1)\cdots f_n(x_n)- g_1(x_1)\cdots g_n(x_n)|}{e^{\psi_0(|x_1|)}\cdots e^{\psi_0(|x_n|)}}\\
			&\leq & \sum_{k=1}^n\dfrac{|f_k(x_k)-g_k(x_k)|}{e^{\psi_0(|x_k|)}}\left(   \prod_{j=k+1}^{n+1}\dfrac{|f_j(x_j)|}{e^{\psi_0(|x_j|)}}\prod_{j=0}^{k-1}\dfrac{|g_j(x_j)|}{e^{\psi_0(|x_j|)}} \right) \\ 
			&\leq & \epsilon ~ n \prod_{j=1}^n (1+\|f_j\|_{\psi_0}) \\ 
			&\leq & C\epsilon.
		\end{eqnarray*}
		This proves the first part of the result.

  For the converse part, let us assume that $\Phi_{\Lambda}(\R^n)$ is dense in $(C_{\psi}(\R^n),\|\cdot \|_{\psi})$ for any set $\Lambda \subset \R^n$ \textcolor{black}{which is a product of one dimensional closed sets of positive Lebesgue measure}. If possible, let $$\int_0^{\infty}\dfrac{\psi(x)}{1+x^2}dx<\infty.$$ By Theorem 2.6(b) of \cite{BRS}, we get a non-zero $f\in C_c^\infty(\R^n)$ satisfying $$|\widehat{f}(\xi)|\leq Ce^{-\psi(|\xi|)}\quad  \text{ for all } \xi\in \R^n.$$
		Again, applying Lemma 2.5 of \cite{BRS}, there exists a non-zero continuous function $F$ on $\R^n$ which vanishes on a non-empty open subset of $\R^n$ and satisfies
		\be\label{F cond}
		\int_{\R^n}|\widehat{F}(\xi)|e^{\psi(|\xi|)}d\xi<\infty.\ee
		We define a non-zero complex Borel measure $\mu$ on $\R^n$ by 
		\bes \label{mu} d\mu(x)=\widehat{F}(x)dx. \ees 
		By Lemma \ref{dual lem}, it follows from (\ref{F cond}) that $\mu\in (C_\psi(\R^n),\|\cdot\|_\psi)^*.$ Moreover, since both $\widehat{\mu}$ and $F$ are continuous functions on $\R^n,$ $\widehat{\mu}$ also vanishes on an open set, say $U \subset \R^n.$
		This implies that as a bounded linear functional on $(C_\psi(\R^n),\|\cdot\|_\psi),$ $\mu$ vanishes on $\Phi_U(\R^n).$ Since $U$ \textcolor{black}{contains a product of one dimensional closed sets of positive Lebesgue measure,} using the density of  $\Phi_U(\R^n)$ in $(C_\psi(\R^n),\|\cdot\|_\psi),$ we can conclude that $\mu$ is identically zero, which gives a contradiction. Hence the result is proved.
	\end{proof}

	Now we will prove the density of $\Phi_\Lambda(\R^n)$ in another class of Banach space, $L^p(\R^n,\textcolor{black}{|}\mu\textcolor{black}{|})$ for $1\leq p < \infty,$ the space of $L^p$ functions with respect to $\textcolor{black}{|}\mu\textcolor{black}{|},$  where $\mu$ is a \textcolor{black}{complex (finite)} measure satisfying certain conditions. This is actually a consequence of the above result (Theorem \ref{several density}) along with a well-known result by A. Bakan (see \cite{Ba}, \textcolor{black}{\cite[Theorem 14]{P1}}). For the sake of completeness we will provide a direct proof. 

	\begin{Thm}\label{Lp dense}
Let $\psi:[0,\infty)\rightarrow [0,\infty)$ be an increasing (not necessarily continuous) function  such that 
		\be\label{a} \int_0^\infty\dfrac{\psi(x)}{1+x^2}dx=\infty \ee and $\mu$ be a \textcolor{black}{complex (finite)} measure satisfying 
		\be\label{b} \int_{\R^n} e^{\psi(|x|)}d\textcolor{black}{|}\mu\textcolor{black}{|}(x)<\infty. \ee
		For any set  $\Lambda \subset \R^n,$ \textcolor{black}{which is a product of one dimensional closed sets of positive Lebesgue measure}, the space $\Phi_\Lambda(\R^n)$ is dense in $L^p(\R^n,\textcolor{black}{|}\mu\textcolor{black}{|}),~ 1\leq p < \infty.$ 
\end{Thm}
\begin{proof}
		Since $C_c(\R^n)$ is dense in $L^p(\R^n,\textcolor{black}{|}\mu\textcolor{black}{|}),$ for $1\leq p < \infty$ it is enough to prove that $\Phi_\Lambda(\R^n)$ is dense in $C_c(\R^n)$ with respect to the corresponding $\|\cdot\|_p$ norm of $L^p(\R^n,\textcolor{black}{|}\mu\textcolor{black}{|}),$ for $1\leq p < \infty.$ We consider the continuous, increasing function $\Psi:[0,\infty)\rightarrow [0,\infty)$ defined by
	\bes\Psi(x) = \begin{cases} \frac{1}{p}\int_{x-1}^x \psi(t)dt, & x \geq 1 \\ 0, & x<1 \end{cases}.\ees
	 Now, as $p \, \Psi(x)\leq \psi(x),$ by (\ref{b}) we have
	\be\label{c} \int_{\R^n} e^{p\Psi(|x|)}d\textcolor{black}{|}\mu\textcolor{black}{|}(x)<\infty. \ee
	Again, since  $\psi(x-1)\leq p\,\Psi(x),$ it easily follows from (\ref{a}) that 
	\bes \int_0^\infty\dfrac{\Psi(x)}{1+x^2}dx=\infty. \ees
Let $f\in C_c(\R^n) \subseteq C_{\Psi}(\R^n).$ Given any $\epsilon >0,$ from Theorem \ref{several density} we get $g\in \Phi_\Lambda(\R^n)$ such that $\|f-g\|_{\Psi}<\epsilon.$ From (\ref{c}) we get $C>0$ such that 
	\bes\|f-g\|_p^p ~=~ \int_{\R^n}\dfrac{|f(x)-g(x)|^p}{e^{p\Psi(|x|)}}e^{p\Psi(|x|)}d\textcolor{black}{|}\mu\textcolor{black}{|}(x) ~\leq~ \|f-g\|_{\Psi}^p \int_{\R^n} e^{p\Psi(|x|)}d\textcolor{black}{|}\mu\textcolor{black}{|}(x) ~<~ C \epsilon^p.\ees	
	Hence the theorem follows.
\end{proof}

	\begin{rem}
		It follows from the proof of Theorem 14 of \cite{P1} that if $\Phi_{\Lambda}(\R^n)$ is dense in $L^p(\R^n,\textcolor{black}{|}\mu\textcolor{black}{|}),$ for a \textcolor{black}{complex (finite)} measure $\mu$ on $\R^n,$ then there exists $\psi$ satisfying (\ref{b}) such that $\Phi_{\Lambda}(\R^n)$ is dense in $(C_\psi(\R^n),\|\cdot\|_\psi).$
	\end{rem}
	
	\subsection{Beurling's Theorem}
	We will now prove some several variable analogues of Beurling's result (\textcolor{black}{Lemma} \ref{psi-ber}) including a generalisation of the same. \textcolor{black}{The proof of Theorem \ref{several beurling} is} based on the ideas used in \cite{BRS} to prove the analogue of Levinson's theorem. Accordingly, a main ingredient of the \textcolor{black}{proof is} the density of a linear span of exponentials $\Phi_{\Lambda}(\R^n)$ in the Banach \textcolor{black}{space} $(C_{\psi}(\R^n), \|\cdot\|_{\psi}),$ which we have already proved in Theorem \ref{several density}. \textcolor{black}{Thereafter, we prove Corollary \ref{Beurling lp}, which turns out to be a generalisation of Beurling's Theorem.} We have the following analogue of \textcolor{black}{Lemma} \ref{psi-ber} on $\R^n$:
	
	\begin{Thm}\label{several beurling}
		Let $\psi:[0,\infty) \rightarrow [0,\infty)$ be an increasing (not necessarily continuous) function such that $\psi(x)\to \infty $ as $x \to \infty$ and
		\begin{equation*}
			I = \int_0^\infty \dfrac{\psi(x)}{1+x^2}dx.
		\end{equation*}
		\begin{itemize}
			\item[(a)] \textcolor{black}{If $I= +\infty$ and $\mu$ is a complex (finite) Borel measure on $\R^n$ satisfying 
			\begin{equation}\label{eq;4}
				\int_{\R^n}e^{\psi(|x|)}d|\mu|(x)<\infty
			\end{equation}
			such that } $\widehat{\mu}$ vanishes on a set $\Lambda \subset \R^n,$ \textcolor{black}{which is a product of one dimensional closed sets of positive Lebesgue measure}, then $\mu $ is identically zero.
			\item[(b)] \textcolor{black}{If $I < +\infty,$} then there exists a non-trivial complex Borel measure $\mu$ on $\R^n$ satisfying (\ref{eq;4}) such that $\widehat{\mu}$ vanishes on \textcolor{black}{ a product of one dimensional closed sets of positive Lebesgue measure}.	
		\end{itemize}
	\end{Thm}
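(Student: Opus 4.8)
The plan is to reduce both parts to the corresponding one-variable statement (Theorem~\ref{psi-ber}) via the duality established in Lemma~\ref{dual lem} and the density theorem (Theorem~\ref{several density}), exactly mirroring the Levinson-type argument of \cite{BRS}.

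For part (a), suppose $\widehat{\mu}$ vanishes on a positive rectangle type set $\Lambda \subset \R^n$ and $I = \infty$. The measure $\mu$ defines, by (\ref{eq;4}) and Lemma~\ref{dual lem}, a bounded linear functional $T$ on $(C_\psi(\R^n),\|\cdot\|_\psi)$ given by $T(f)=\int_{\R^n} f\, d\mu$. The hypothesis that $\widehat{\mu}$ vanishes on $\Lambda$ says precisely that $T(e_\lambda)=\int_{\R^n} e^{-i\lambda\cdot x}\,d\mu(x)=\widehat{\mu}(\lambda)=0$ for every $\lambda\in\Lambda$, so $T$ vanishes on $\Phi_\Lambda(\R^n)$. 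Since $I=\infty$, Theorem~\ref{several density} tells us that $\Phi_\Lambda(\R^n)$ is dense in $(C_\psi(\R^n),\|\cdot\|_\psi)$, hence $T\equiv 0$, and therefore $\mu$ is the zero measure. I expect this direction to be essentially immediate once the earlier results are in place; there is no real obstacle, only the bookkeeping of checking that vanishing of $\widehat{\mu}$ on $\Lambda$ is equivalent to $T$ killing $\Phi_\Lambda(\R^n)$.

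For part (b), assume $I<\infty$. Here I would reuse the construction already carried out inside the converse direction of the proof of Theorem~\ref{several density}: since $\int_0^\infty \psi(x)/(1+x^2)\,dx<\infty$, Theorem~2.6(b) of \cite{BRS} yields a nonzero $f\in C_c^\infty(\R^n)$ with $|\widehat f(\xi)|\le Ce^{-\psi(|\xi|)}$, and then Lemma~2.5 of \cite{BRS} produces a nonzero continuous $F$ on $\R^n$ vanishing on a nonempty open set $V$ with $\int_{\R^n}|\widehat F(\xi)|e^{\psi(|\xi|)}\,d\xi<\infty$. Define the nonzero complex Borel measure $\mu$ by $d\mu(x)=\widehat F(x)\,dx$; then (\ref{eq;4}) holds by the last integrability estimate, and by Fourier inversion $\widehat{\mu}$ agrees (up to a harmless constant/reflection) with $F$, hence vanishes on the open set $V$. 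Finally, any nonempty open set contains a product of open intervals, whose closures have positive Lebesgue measure, so $V$—and therefore the zero set of $\widehat{\mu}$—is of positive rectangle type. This completes (b).

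The only point requiring a little care is matching conventions between $F$ and $\widehat{\mu}$: with $d\mu(x)=\widehat F(x)\,dx$ one has $\widehat{\mu}(\lambda)=\int_{\R^n}e^{-i\lambda\cdot x}\widehat F(x)\,dx = (2\pi)^n F(-\lambda)$ by the inversion formula, so $\widehat{\mu}$ vanishes on $-V$, which is again open and hence of positive rectangle type; one should simply state this normalization explicitly rather than sweep it under the rug. Beyond that, the argument is a direct transcription of the scheme used for Levinson's theorem in \cite{BRS}, now fed by Theorem~\ref{several density} in place of the open-set density result, so I anticipate no genuine difficulty.
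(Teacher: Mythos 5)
Your proposal is correct and follows essentially the same route as the paper: part (a) is the identical duality argument, converting $\mu$ into a functional on $(C_{\psi}(\R^n),\|\cdot\|_{\psi})$ via Lemma \ref{dual lem} and killing it with the density statement of Theorem \ref{several density}. For part (b) the paper merely cites the failure of density from the converse half of Theorem \ref{several density} and extracts a nonzero annihilating measure by Hahn--Banach plus Lemma \ref{dual lem}, whereas you inline the explicit construction $d\mu(x)=\widehat{F}(x)\,dx$ that the paper itself uses to establish that non-density; the content is the same, and your remark that $\widehat{\mu}(\lambda)=(2\pi)^n F(-\lambda)$, so that the vanishing set is $-V$ rather than $V$, is a normalization point the paper glosses over.
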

	
	\begin{proof}
		First we shall prove (a) by further assuming $\psi$ is continuous. Otherwise, we can work with the non-negative, continuous, increasing function $\Psi$ (for $p=1$), as in the proof of Theorem \ref{Lp dense}. Since the complex Borel measure $\mu$ satisfies (\ref{eq;4}), from Lemma \ref{dual lem} we have that $\mu \in (C_\psi(\R^n),\|\cdot\|_\psi)^*,$ that is,  there is a bounded linear functional $T_{\mu}$ on $C_{\psi}(\R^n)$ defined by
		$$T_\mu(g)=\int_{\R^n}g(x)d\mu(x), \quad \txt{for } g\in C_{\psi}(\R^n).$$  
		Since $\widehat{\mu}$ vanishes on $\Lambda \subset \R^n,$ we have 
		$$T_\mu(e_\la) = \widehat{\mu}(\la) = 0, \quad \txt{ for } \la \in \Lambda.$$ Moreover, as $\Phi_{\Lambda}(\R^n)$ is spanned by such $e_\la$'s, it follows that $$T_\mu(\phi) = 0, \quad \txt{ for } \phi \in \Phi_{\Lambda}(\R^n).$$
		Since $\Lambda \subset \R^n$ is \textcolor{black}{ a product of one dimensional closed sets of positive Lebesgue measure}, applying Theorem \ref{several density} we get that $T_\mu$ is identically zero on $C_{\psi}(\R^n).$ Hence we can conclude that $\mu$ is identically zero. This proves part (a).
		
		Now we shall prove (b). If $I$ is finite, from Theorem \ref{several density} we obtain a set $\Lambda \subset \R^n,$ \textcolor{black}{ which is a product of one dimensional closed sets of positive Lebesgue measure} such that the subspace $\Phi_\Lambda(\R^n)$ is not dense in $(C_\psi(\R^n),\|\cdot\|_\psi).$ So there exists a non-zero complex Borel measure $\mu \in (C_\psi(\R^n),\|\cdot\|_\psi)^*$ satisfying equation (\ref{eq;4}) (follows from Lemma \ref{dual lem}) such that $\textcolor{black}{T_\mu}(\Phi_{\Lambda})\equiv 0.$ This implies that $\widehat{\mu}$ vanishes on $\Lambda.$ Hence the theorem is proved.
	\end{proof}
	
	\begin{rem}
  Now, it is obvious that Theorem \ref{equiv} stated in the introduction follows immediately from Theorem \ref{several density} and Theorem \ref{several beurling}.
	\end{rem}
	
    	It clearly follows that the above result is true for $f\in L^p(\R^n),$ for $1 \leq p \leq 2$ instead of the complex measure $\mu.$ \textcolor{black}{Moreover,} using the duality of $L^p$ spaces and Theorem \ref{Lp dense}, in a similar manner as above \textcolor{black}{one can} get the following analogue of \textcolor{black}{Lemma}  \ref{psi-ber} on $\R^n$ for $f \in L^p(\R^n,\textcolor{black}{|}\mu\textcolor{black}{|}),$ $1 < p \leq \infty,$ where $\mu$ is a \textcolor{black}{complex (finite)} measure satisfying certain condition. \textcolor{black}{However, here we provide a straightforward proof using Theorem \ref{several beurling}.}
	\begin{Cor} \label{Beurling lp} 
		Let $\psi:[0,\infty)\rightarrow [0,\infty)$ be an increasing (not necessarily continuous) function such that 
		\be \label{psi int}\int_0^\infty\dfrac{\psi(x)}{1+x^2}dx=\infty\ee
		and $\mu$ be a \textcolor{black}{complex (finite) Borel} measure satisfying 
		\bes\int_{\R^n} e^{\psi(|x|)}d\textcolor{black}{|}\mu\textcolor{black}{|}(x)<\infty.\ees
		If $f\in L^p(\R^n,\textcolor{black}{|}\mu\textcolor{black}{|}),$ for $1<p\leq\infty,$ is such that $\textcolor{black}{\widehat{fd\mu}}$ vanishes on $\Lambda \subset \R^n,$ where $\Lambda$ is \textcolor{black}{ a product of one dimensional closed sets of positive Lebesgue measure}, then $f=0, ~\mu$ almost everywhere. 
	\end{Cor}
\begin{proof}
\textcolor{black}{Since $\mu$ is a finite measure, $f\in L^p(\R^n,|\mu|)$ implies $f\in L^1(\R^n,|\mu|)$ and hence it follows that $fd\mu$ is a complex measure on $\R^n.$ As $1<p\leq \infty,$ $p'$ (the dual index of $p$) is non-zero. Now, by H\"{o}lder's inequality, we have
\bes 
\int_{ \R^n} e^{\psi(|x|)/p'} |f(x)| \, d|\mu|(x)\leq \left(\int_{ \R^n} |f(x)|^p d|\mu|(x)\right)^{1/p}\left(\int_{ \R^n} e^{\psi(|x|)} d|\mu|(x)\right)^{1/p'}< \infty.
\ees
As $\psi/p'$ satisfies (\ref{psi int}) and $\widehat{fd\mu}$ vanishes on \textcolor{black}{ a product of one dimensional closed sets of positive Lebesgue measure,}  then from Theorem \ref{several beurling}, it follows that $fd\mu$ is identically zero, which further implies that $f=0,\,\mu$ almost everywhere.}
\end{proof}
	\begin{rem}
  Note that as an uncertainty principle the above result is somewhat unusual since the decay condition on the measure $\mu$ allows some growth on the function $f.$ 
	\end{rem}
	\begin{rem}
Taking $f$ to be the constant function $1$ in the above theorem, if $\widehat{\mu}$ vanishes on \textcolor{black}{ a product of one dimensional closed sets of positive Lebesgue measure}, then  it is easy to see that $\mu$ is identically zero. Hence this can be considered as a generalisation of Beurling's theorem (Theorem \ref{several beurling}).
	\end{rem}	

	\section{On the $n$-dimensional torus $\T^n$}
	
	In this section we will prove analogues of Theorem \ref{several density} and Theorem \ref{several beurling} on the $n$-dimensional torus
	$$\T^n=\left\{\left(e^{ix_1}, \cdots , e^{ix_n}\right):~ x_1, \cdots ,x_n\in [-\pi,\pi)\right\}.$$ Clearly we can identify $\T^n$ with $[-\pi,\pi)^n$ which is of finite Lebesgue measure in $\R^n.$ As in the case of $\R^n,$ functions vanishing on a set of positive Lebesgue measure inside $\T^n$ certainly makes sense. Moreover, $\T^n$ being a set of finite Lebesgue measure, it is enough to consider functions in $L^1(\T^n).$ For $f\in L^1(\T^n),$ we define its Fourier coefficients by the formula,
	$$ \widehat{f}(k)= \frac{1}{(2\pi)^n}\int_{\T^n}f(x)e^{-ix\cdot k}dx, \text{ for } k\in \Z^n.$$ In order to prove these results, we need to proceed as in the previous section. First, in a similar way as $\R^n,$ we will restate Theorem \ref{ber-th-c}:
\textcolor{black}{	
	\begin{Lem}\label{psi-ber-c}
		Let $\psi$ be an increasing non-negative function on $\N \cup \{0\}$ such that
  \bes \sum_{k=0}^\infty\dfrac{\psi(k)}{1+k^2} = \infty.
		\ees
 If $f\in L^2(\T)$ is such that 
		\bes
		\sum_{k=0}^\infty|\widehat{f}(k) ~ e^{\psi(k)} |^2 < \infty,  \ees
		then $f$ satisfies (\ref{eq;7c}), in particular Theorem \ref{ber-th-c} holds for $f.$ 
	\end{Lem}
}
	
	\subsection{Density of Exponentials}
	
	For any function $\psi:[0,\infty) \rightarrow [0,\infty)$ such that $\psi(x)\to \infty $ as $x\to \infty,$ we consider the space of sequences over $\Z^n$ defined by $$c_{\psi}(\Z^n)=\left\lbrace (a_k)_{k\in \Z^n} : \lim_{|k|\to \infty}\dfrac{a_k}{e^{\psi(|k|)}}=0\right\rbrace .$$ It is easy to see that $(c_{\psi}(\Z^n),\|\cdot \|_{\psi})$ is a normed linear space where 
	$$\|A\|_{\psi}=\sup_{k\in\Z^n}\dfrac{|a_k|}{e^{\psi(|k|)}}, \quad \txt{ for } A=(a_k)_{k\in \Z^n}\in c_{\psi}(\Z^n).$$ 
	We define the Banach space $(c_0(\Z^n),\|\cdot\|_{\infty})$ of sequences vanishing at infinity by 
	$$ c_0(\Z^n)=\ds{\left\lbrace (a_k)_{k\in\Z^n} : \lim_{|k|\to \infty}a_k=0 \right\rbrace }.$$
 \textcolor{black}{It is well known that the dual of $(c_0(\Z^n),\|\cdot\|_{\infty}) $ is $(l^1(\Z^n),\|\cdot\|_{1})$ where
		$$ l^1(\Z^n)=\left\{ (a_k)_{k\in\Z^n} : \sum_{k\in\Z^n}|a_k|<\infty \right\}.$$ }	As in the case of $\R^n,$ we will need to calculate the dual of $(c_{\psi}(\Z^n),\|\cdot \|_{\psi})$ explicitly. \textcolor{black}{In a similar manner, we define a bijective linear isometry $T_\psi: (c_0(\Z^n),\|\cdot\|_{\infty}) \to (c_{\psi}(\Z^n),\|\cdot\|_{\psi})$ by $T_\psi((a_k)_{k\in\Z^n}) = (e^{\psi(|k|)}a_k)_{k\in\Z^n},$ for $(a_k)_{k\in\Z^n}\in c_0(\Z^n).$ From this, we get a natural bijection between the respective dual spaces, which allows us to obtain the following lemma:}
	
	\begin{Lem}\label{dual-lem-c}
		$(c_{\psi}(\Z^n),\|\cdot\|_{\psi})$ is a Banach space isometrically isomorphic to $(c_0(\Z^n),\|\cdot\|_{\infty})$ and its dual is given by
		$$(c_{\psi}(\Z^n),\|\cdot\|_{\psi})^*=\left\lbrace (d_k)_{k\in \Z^n} \in l^1( \Z^n) :
		\sum_{k\in\Z^n}e^{\psi(|k|)}|d_k|<\infty \right\rbrace .$$
	\end{Lem}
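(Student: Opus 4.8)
The plan is to mirror exactly the argument of Lemma \ref{dual lem}, replacing integration against measures by summation against sequences and $C_0(\R^n)$ by $c_0(\Z^n)$. First I would establish the isometric isomorphism: since $\psi$ is real-valued on $[0,\infty)$, the map $B_\psi : c_0(\Z^n) \to c_\psi(\Z^n)$ defined by $B_\psi\big((a_k)_{k\in\Z^n}\big) = \big(a_k e^{\psi(|k|)}\big)_{k\in\Z^n}$ is well-defined (the decay condition $a_k \to 0$ transforms precisely into $a_k e^{\psi(|k|)}/e^{\psi(|k|)} \to 0$), linear, and bijective with inverse $(b_k)_{k} \mapsto (b_k e^{-\psi(|k|)})_k$. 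It is an isometry because $\|B_\psi A\|_\psi = \sup_k |a_k e^{\psi(|k|)}|/e^{\psi(|k|)} = \sup_k |a_k| = \|A\|_\infty$. In particular $(c_\psi(\Z^n),\|\cdot\|_\psi)$ is a Banach space, being isometric to the Banach space $(c_0(\Z^n),\|\cdot\|_\infty)$.

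Next I would transport duals. The adjoint $T \mapsto T\circ B_\psi$ gives a (isometric) bijection from $(c_\psi(\Z^n),\|\cdot\|_\psi)^*$ onto $(c_0(\Z^n),\|\cdot\|_\infty)^*$. Now invoke the classical identification $(c_0(\Z^n),\|\cdot\|_\infty)^* = \ell^1(\Z^n)$: every bounded linear functional $S$ on $c_0(\Z^n)$ is represented by a unique $(\gamma_k)_{k\in\Z^n}\in\ell^1(\Z^n)$ via $S\big((a_k)_k\big) = \sum_{k\in\Z^n} a_k\,\gamma_k$. Given $T \in (c_\psi(\Z^n),\|\cdot\|_\psi)^*$, applying this to $T\circ B_\psi$ produces $(\gamma_k)_k \in \ell^1(\Z^n)$ with
$$
T(B) = (T\circ B_\psi)(B_\psi^{-1}B) = \sum_{k\in\Z^n} \big(b_k e^{-\psi(|k|)}\big)\gamma_k = \sum_{k\in\Z^n} b_k\, d_k, \qquad \txt{for } B=(b_k)_k \in c_\psi(\Z^n),
$$
where $d_k = e^{-\psi(|k|)}\gamma_k$. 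Since $e^{-\psi(|k|)}\le 1$ for all $k$ (using $\psi\ge 0$), we get $\sum_k |d_k| \le \sum_k |\gamma_k| < \infty$, so $(d_k)_k\in\ell^1(\Z^n)$, and moreover $\sum_{k\in\Z^n} e^{\psi(|k|)}|d_k| = \sum_k |\gamma_k| < \infty$, which is precisely the stated membership condition.

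For the reverse inclusion I would check that any $(d_k)_k\in\ell^1(\Z^n)$ with $\sum_k e^{\psi(|k|)}|d_k|<\infty$ defines a bounded functional on $(c_\psi(\Z^n),\|\cdot\|_\psi)$ by $B=(b_k)_k \mapsto \sum_k b_k d_k$; indeed $\big|\sum_k b_k d_k\big| \le \sum_k \frac{|b_k|}{e^{\psi(|k|)}} e^{\psi(|k|)}|d_k| \le \|B\|_\psi \sum_k e^{\psi(|k|)}|d_k|$, so the series converges absolutely and the functional is bounded. This completes the identification. I do not anticipate a genuine obstacle here; the only point requiring a little care is the routine verification that $c_\psi(\Z^n)$ is complete — which is handled for free by the isometry with $c_0(\Z^n)$ — and keeping straight which exponential weight attaches to which object when passing through $B_\psi$ and its adjoint, exactly the same bookkeeping as in the proof of Lemma \ref{dual lem}.
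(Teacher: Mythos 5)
Your proposal is correct and follows essentially the same route as the paper: the weight map $(a_k)\mapsto (a_k e^{\psi(|k|)})$ as an isometric isomorphism from $c_0(\Z^n)$ onto $c_\psi(\Z^n)$, transport of duals by precomposition, the classical identification $(c_0)^*=\ell^1$, and the substitution $d_k=e^{-\psi(|k|)}\gamma_k$. Your explicit boundedness estimate for the reverse inclusion is slightly more detailed than the paper's ``in a similar manner as above,'' but the argument is the same.
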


	Given $\Lambda \subset \T^n,$ we consider the linear span of exponentials given by 
	$$\Phi_{\Lambda}(\Z^n)= span \lbrace E_{\la}: \la \in \Lambda \rbrace, \quad \txt{ where } E_\lambda=\left(e^{i\la \cdot k}\right)_{k\in\Z^n}.$$ 
	We will now prove the density of the subspace $\Phi_{\Lambda}(\Z)$ in $c_{\psi}(\Z)$ for certain $\Lambda \subset \T.$
	\begin{Thm}\label{density-c}
		Let $\psi:[0,\infty) \rightarrow [0,\infty)$ be an increasing function satisfying 
		\bes \sum_{k=0}^\infty\dfrac{\psi(k)}{1+k^2} = \infty. \ees
		For any $\Lambda \subset \T$ such that $\Lambda$ is of positive Lebesgue measure, $\Phi_{\Lambda}(\Z)$ is dense in $(c_{\psi}(\Z),\|\cdot \|_{\psi}).$
	\end{Thm}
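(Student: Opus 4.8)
The plan is to follow the proof of Theorem \ref{density} on $\R$ almost verbatim, with the torus ingredients (Lemma \ref{dual-lem-c} and Theorem \ref{psi-ber-c}) replacing their real-line analogues. By the Hahn--Banach theorem, to establish density of $\Phi_{\Lambda}(\Z)$ in $(c_{\psi}(\Z),\|\cdot\|_{\psi})$ it suffices to show that any bounded linear functional $L$ on $(c_{\psi}(\Z),\|\cdot\|_{\psi})$ that annihilates the subspace $\Phi_{\Lambda}(\Z)$ is identically zero. (I would first record the elementary observation that divergence of $\sum_{k=0}^{\infty}\psi(k)/(1+k^2)$ forces the increasing function $\psi$ to be unbounded, hence $\psi(x)\to\infty$ as $x\to\infty$, so that $c_{\psi}(\Z)$ is well defined and contains every $E_{\la}$.)

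Fix such an $L$. By Lemma \ref{dual-lem-c}, there is a sequence $D=(d_k)_{k\in\Z}\in l^1(\Z)$ with $\sum_{k\in\Z}e^{\psi(|k|)}|d_k|<\infty$ such that $L(B)=\sum_{k\in\Z}b_k d_k$ for $B=(b_k)_{k\in\Z}\in c_{\psi}(\Z)$. Applying $L$ to $E_{\la}=(e^{i\la k})_{k\in\Z}$ and using that $L$ vanishes on $\Phi_{\Lambda}(\Z)$ gives
\[
\sum_{k\in\Z} d_k\, e^{i\la k}=0 \qquad \txt{for all } \la\in\Lambda.
\]
Since $D\in l^1(\Z)$, the series $g(x)=\sum_{k\in\Z}d_k e^{ikx}$ converges absolutely and uniformly, so $g$ is a continuous function on $\T$, in particular $g\in L^2(\T)$; its Fourier coefficients are $\widehat{g}(k)=d_k$ for $k\in\Z$, and by the displayed identity $g$ vanishes on the positive Lebesgue measure set $\Lambda$.

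It then remains to verify the hypotheses of Beurling's theorem on $\T$ (Theorem \ref{psi-ber-c}) for $g$. The restriction of $\psi$ to $\N\cup\{0\}$ is increasing and non-negative and satisfies $\sum_{k=0}^{\infty}\psi(k)/(1+k^2)=\infty$ by assumption. Moreover $\sum_{k=0}^{\infty}e^{\psi(k)}|d_k|<\infty$ implies that $e^{\psi(k)}|d_k|$ is bounded, say by $M$, whence
\[
\sum_{k=0}^{\infty}\bigl|\widehat{g}(k)\,e^{\psi(k)}\bigr|^2=\sum_{k=0}^{\infty}\bigl(e^{\psi(k)}|d_k|\bigr)^2\le M\sum_{k=0}^{\infty}e^{\psi(k)}|d_k|<\infty.
\]
Hence Theorem \ref{psi-ber-c} applies and yields $g\equiv 0$, so $d_k=\widehat{g}(k)=0$ for every $k\in\Z$, i.e.\ $L\equiv 0$; this proves the theorem.

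I do not expect a genuine obstacle here: the argument is a direct transcription of the proof of Theorem \ref{density}. The only points demanding a little care are getting the normalization of the Fourier coefficients right when passing from the dual sequence $(d_k)$ to the function $g$, and the (routine) passage from the weighted $l^1$ summability of $(d_k)$ supplied by Lemma \ref{dual-lem-c} to the weighted $l^2$ summability demanded by the $L^2$-formulation of Beurling's theorem on the circle.
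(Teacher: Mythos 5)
Your proof is correct and follows essentially the same route as the paper: Hahn--Banach reduction, the dual description from Lemma \ref{dual-lem-c}, passage to a function $g \in L^2(\T)$ with $\widehat{g}(k)=d_k$ vanishing on $\Lambda$, and an application of Theorem \ref{psi-ber-c}. The only cosmetic difference is that you realize $g$ as the uniformly convergent ($l^1$) Fourier series rather than via $l^1(\Z)\subset l^2(\Z)$, which in fact gives the slightly cleaner conclusion that $g$ vanishes everywhere on $\Lambda$ rather than almost everywhere.
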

	
	\begin{proof}
		We consider a bounded linear functional $L$ on $(c_{\psi}(\Z),\|\cdot \|_{\psi})$ which vanishes on the space $\Phi_{\Lambda}(\Z).$ From Lemma \ref{dual-lem-c}, we get a sequence $D=(d_k)_{k\in\Z} \in l^1(\Z)$ satisfying 
		\be \label{dk} \ds{\sum_{k\in\Z}e^{\psi(|k|)}|d_k|<\infty} \ee such that
		\be \label{dualrel}  L(B)=\sum_{k\in\Z} b_k~d_k, \quad \txt{ for } B=(b_k)_{k\in\Z} \in c_{\psi}(\Z). \ee
		Since $l^1(\Z)\subset l^2(\Z),$ we have $D \in l^2(\Z).$ So we get $g \in L^2(\T)$ such that $\widehat{g}(k) = d_k$ for all $k \in \Z.$ Since $L$ vanishes on $\Phi_{\Lambda}(\Z),$ using (\ref{dualrel}) we have $$L(E_\la) = \sum_{k\in\Z} e^{i\lambda \cdot k}d_k=0, \quad \txt{ for all } \lambda\in \Lambda .$$ It follows that $g(\la) = 0$ for almost every $\la \in \Lambda,$ a set of positive Lebesgue measure inside $\T.$ Moreover, from (\ref{dk}) we get that  
		$$ \sum_{k=0}^\infty e^{2\psi(k)}|\widehat{g}(k)|^2<\infty .$$
		So we can apply \textcolor{black}{Lemma} \ref{psi-ber-c} to $g,$ to conclude that $g\equiv 0.$ It follows that $d_k=0,\text{ for all } k \in \Z.$ Hence $L$ is identically zero and the result follows.
	\end{proof}
	
	\vspace{0.1in}
	
	Now, we will look at the $n$-dimensional case. We will prove that the density of  $\Phi_\Lambda(\Z^n)$ in $(c_\psi(\Z^n),\|\cdot\|_\psi),$  \textcolor{black}{where $\Lambda$ is a product of one dimensional sets of positive Lebesgue measure} is also characterized by the integrability condition on $\psi:$
	
	\vspace{0.1in}
	\begin{Thm}\label{sev-den-c}
		Let $\psi$ be a non-negative increasing function on $[0,\infty)$ such that $\psi(x)\to \infty $ as $x\to \infty.$ The space $\Phi_\Lambda(\Z^n)$ is a dense subspace in $(c_\psi(\Z^n),\|\cdot\|_\psi)$ for any set \textcolor{black}{$\Lambda = \Lambda_1\times \cdots \times \Lambda_n  \subset \T^n,$ where each $\Lambda_j$ is a set of positive Lebesgue measure} if and only if 
		\be \label{psiinf} \sum_{k=0}^\infty\dfrac{\psi(k)}{1+k^2} = \infty. \ee
	\end{Thm}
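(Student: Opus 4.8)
The plan is to follow the proof of Theorem~\ref{several density} almost verbatim, with Theorem~\ref{density-c} and Lemma~\ref{dual-lem-c} playing the roles of Theorem~\ref{density} and Lemma~\ref{dual lem}; the discrete setting actually makes the passage to products slightly cleaner.

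\emph{Sufficiency of (\ref{psiinf}).} Set $\psi_0(x)=\psi(x)/n$, which still satisfies $\sum_k\psi_0(k)/(1+k^2)=\infty$. As $\Lambda$ is of positive rectangle type, fix $\Gamma=\Lambda_1\times\cdots\times\Lambda_n\subseteq\Lambda$ with $m(\Lambda_j)>0$, and introduce
\beas
\mathcal{P}\Phi_\Gamma(\Z^n) &=& \text{span}\left\{ k\mapsto f_1(k_1)\cdots f_n(k_n)\ :\ f_j\in\Phi_{\Lambda_j}(\Z),\ 1\le j\le n\right\},\\
\mathcal{P}c_{\psi_0}(\Z^n) &=& \text{span}\left\{ k\mapsto f_1(k_1)\cdots f_n(k_n)\ :\ f_j\in c_{\psi_0}(\Z),\ 1\le j\le n\right\}.
\eeas
From $E_{\lambda_1}\cdots E_{\lambda_n}=E_\lambda$ for $\lambda=(\lambda_1,\ldots,\lambda_n)\in\Gamma$ we get $\mathcal{P}\Phi_\Gamma(\Z^n)\subseteq\Phi_\Lambda(\Z^n)\subseteq c_\psi(\Z^n)$. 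By the isometry $c_\psi(\Z^n)\cong c_0(\Z^n)$ of Lemma~\ref{dual-lem-c}, the finitely supported sequences are dense in $c_\psi(\Z^n)$, and each of them is a finite linear combination of point masses $\delta_m$ with $\delta_m(k)=\delta_{m_1}(k_1)\cdots\delta_{m_n}(k_n)$ and $\delta_{m_j}\in c_0(\Z)\subseteq c_{\psi_0}(\Z)$, hence lies in $\mathcal{P}c_{\psi_0}(\Z^n)$; so it suffices to show $\mathcal{P}\Phi_\Gamma(\Z^n)$ is dense in $(\mathcal{P}c_{\psi_0}(\Z^n),\|\cdot\|_\psi)$. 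Given $f$ with $f(k)=f_1(k_1)\cdots f_n(k_n)$, $f_j\in c_{\psi_0}(\Z)$, and $\epsilon\in(0,1)$, Theorem~\ref{density-c} yields $g_j\in\Phi_{\Lambda_j}(\Z)$ with $\|f_j-g_j\|_{\psi_0}<\epsilon$, so that $\sup_{k_j}|g_j(k_j)|/e^{\psi_0(|k_j|)}\le 1+\|f_j\|_{\psi_0}$. Since $\psi$ is increasing and $|k|\ge|k_j|$, we have $\psi(|k|)\ge\psi(|k_j|)$, hence $e^{\psi(|k|)}\ge\prod_{j=1}^n e^{\psi_0(|k_j|)}$; with $g(k)=g_1(k_1)\cdots g_n(k_n)\in\mathcal{P}\Phi_\Gamma(\Z^n)$, the telescoping estimate from the proof of Theorem~\ref{several density} gives $\|f-g\|_\psi\le C\epsilon$.

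\emph{Necessity of (\ref{psiinf}).} Suppose instead that $\sum_{k=0}^\infty\psi(k)/(1+k^2)<\infty$. Then the construction establishing the sharpness of Beurling's (equivalently Levinson's) theorem on the torus --- the $\T^n$ analogue of Lemma~2.5 and Theorem~2.6 of \cite{BRS}, or equivalently the one-dimensional construction applied with the weight $\phi(t)=\psi(nt)$ (which still obeys $\sum_k\phi(k)/(1+k^2)<\infty$) and tensored to $n$ variables as above --- produces a non-zero continuous $g$ on $\T^n$ vanishing on a non-empty open set $U\subseteq\T^n$ with $\sum_{k\in\Z^n}|\widehat g(k)|e^{\psi(|k|)}<\infty$. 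By Lemma~\ref{dual-lem-c}, $D=(\widehat g(k))_{k\in\Z^n}$ defines a non-zero $L\in(c_\psi(\Z^n),\|\cdot\|_\psi)^*$ via $L(B)=\sum_k b_k\widehat g(k)$. For $\lambda\in U$ we have $L(E_\lambda)=\sum_k e^{i\lambda\cdot k}\widehat g(k)=g(\lambda)=0$, so $L$ annihilates $\Phi_U(\Z^n)$; since $U$ is open it contains a product $I_1\times\cdots\times I_n$ of open arcs and is therefore of positive rectangle type, so the assumed density of $\Phi_U(\Z^n)$ in $c_\psi(\Z^n)$ forces $L\equiv 0$, whence $g\equiv 0$, a contradiction.

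The step I expect to be the crux is the construction of $g$ in the necessity part: one needs a nontrivial continuous (or $L^2$) function on $\T^n$ vanishing on a non-empty open set whose Fourier coefficients decay at least like $e^{-\psi(|k|)}$, available precisely because $\sum_k\psi(k)/(1+k^2)<\infty$. I would build this first on $\T$ (mirroring \cite{BRS}) and then set $g(x)=h_1(x_1)\cdots h_n(x_n)$ with each $h_j$ vanishing on an arc $I_j$, so that $g$ vanishes on $\{x:\ x_j\in I_j\text{ for some }j\}$, using $|k|\le|k_1|+\cdots+|k_n|\le n\max_j|k_j|$ to bound $e^{\psi(|k|)}$ by $\prod_j e^{\phi(|k_j|)}$ with $\phi(t)=\psi(nt)$.
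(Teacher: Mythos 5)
Your argument follows the paper's proof essentially step for step: the sufficiency direction (splitting $\psi$ into $n$ copies of $\psi_0=\psi/n$, reducing to product sequences via the density of finitely supported sequences, and telescoping with Theorem \ref{density-c} in each variable) is the same as the paper's, and the necessity direction uses the same duality strategy. The one place you leave a black box is exactly where the paper does real work: producing a non-zero $f$ on $\T^n$ vanishing on an open set with $\sum_{k}|\widehat f(k)|e^{\psi(|k|)}<\infty$. Note that the pointwise bound $|\widehat h(k)|\le Ce^{-\psi(|k|)}$, which is what Theorem 2.6(b) of \cite{BRS} and your crux paragraph deliver, is \emph{not} enough for membership in $(c_\psi(\Z^n),\|\cdot\|_\psi)^*$; you need the weighted $\ell^1$ condition. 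The paper gets it by taking a compactly supported radial bump $g$ on $\R^n$ with $|\widehat g(\xi)|\le Ce^{-\psi(|\xi|)}$, periodizing it to $h\in C^\infty(\T^n)$ (so $|\widehat h(m)|=|\widehat g(m)|\le Ce^{-\psi(|m|)}$ by Poisson summation), and then setting $f=h*h$, so that $|\widehat f(m)|\le Ce^{-\psi(|m|)}|\widehat h(m)|$ with $\widehat h\in \ell^1(\Z^n)$; supports are arranged so that $f$ vanishes on a ball. Your alternative of tensoring one-dimensional constructions $h_1(x_1)\cdots h_n(x_n)$ with weight $\phi(t)=\psi(nt)$ also works (and your verification that $\sum_k\phi(k)/(1+k^2)<\infty$ is correct), but only if each one-dimensional factor already satisfies $\sum_{m}|\widehat{h_j}(m)|e^{\phi(|m|)}<\infty$, i.e.\ you still need the self-convolution (or an equivalent) in one variable before tensoring; as written, "Fourier coefficients decay at least like $e^{-\psi(|k|)}$" does not suffice.
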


\begin{proof}
		First we will prove the density of $\Phi_\Lambda(\Z^n)$ in $(c_\psi(\Z^n),\|\cdot\|_\psi)$ assuming (\ref{psiinf}). We will reduce the problem to the case $n=1$ and then apply Theorem \ref{density-c} to get the result. Let us define, for $x\in [0,\infty),$ $$\psi_0 (x)= \dfrac{\psi (x)}{n}.$$		
		\textcolor{black}{We consider the space $c_{00}(\Z^n)$ of eventually zero sequences over $\Z^n$ given by  
		\bes c_{00}(\Z^n)=\left\{ (a_k)_{k\in \Z^n}: \exists \, m \geq 0 \txt{ such that } a_k=0 \txt{ for } |k|>m \right\}. \ees 
		Now, we define the following space of \textcolor{black}{sequences}
		\bes 
		\mathcal{P}c_{\psi_0}(\Z^n) = \text{span} \left\{ (b_k)_{k  =(k_1, \cdots, k_n) \in \Z^n} :  b_k=b^1_{k_1}\cdots b^n_{k_n}, (b^j_{m})_{m\in \Z}\in c_{\psi_0}(\Z), 1\leq j \leq n  \right\}.
		\ees
		}		
	 We will prove that $ \mathcal{P}c_{\psi_0}(\Z^n) \subseteq c_{\psi}(\Z^n).$ Since $\psi_0$ is increasing, for any $(b_k)_{k =(k_1, \cdots, k_n) \in \Z^n}\in \mathcal{P}c_{\psi_0}(\Z^n),$ we have
		$$\dfrac{|b_k|}{e^{\psi(|k|)}}=\dfrac{|b^1_{k_1}|\cdots |b^n_{k_n}|}{e^{n\psi_0(|k|)}}\leq \dfrac{|b^1_{k_1}|}{e^{\psi_0(|k_1|)}}\cdots \dfrac{|b^n_{k_n}|}{e^{\psi_0(|k_n|)}}.$$
		If $|k|\to \infty$ for $k \in \Z^n,$ there is $1\leq p \leq n$ such that $|k_p|\to \infty.$ Since $(b^p_{m})_{m\in \Z}\in c_{\psi_0}(\Z),$ we get that 
		$$\ds{\lim_{|k_p|\to \infty}\dfrac{|b^p_{k_j}|}{e^{\psi_0(|k_p|)}}=0}.$$ Also, $\ds{\left(\dfrac{|b^j_{m}|}{e^{\psi_0(|m|)}}\right)_{m\in \Z}}$ are bounded for all $1\leq j \leq n.$ So we conclude that $(b_k)_{k \in \Z^n}\in c_{\psi}(\Z^n).$ 
  
  Modifying the standard argument used to prove that $c_{00}$ is dense in $(c_0, \|\cdot\|_\infty),$ it is easy to see that $c_{00}(\Z^n)$ is dense in $(c_{\psi}(\Z^n),\|\cdot \|_{\psi}).$ \textcolor{black}{Moreover, since $c_{00}(\Z^n)\subset \mathcal{P}c_{\psi_0}(\Z^n) \subset c_\psi(\Z^n),$ it follows that $\mathcal{P}c_{\psi_0}(\Z^n)$ is dense in $(c_\psi(\Z^n),\|\cdot\|_\psi).$ }
		
		\textcolor{black}{In order to prove the result that $\Phi_\Lambda(\Z^n)$ is dense in $(c_\psi(\Z^n),\|\cdot\|_\psi),$ we will show that $\Phi_{\Lambda}(\Z^n)$ is dense in $(\mathcal{P}c_{\psi_0}(\Z^n),\|\cdot\|_\psi).$} For this, it is enough to consider $B = (b_k)_{k \in \Z^n} \in \mathcal{P}c_{\psi_0}(\Z^n)$ of the form $$b_k=b^1_{k_1}\cdots b^n_{k_n},\quad \txt{ for }  k=(k_1, \cdots , k_n)\in \Z^n,$$
		where $ B^j=(b^j_m)_{m\in\Z}\in c_{\psi_0}(\Z)$ for $1\leq j \leq n.$ Since $m(\Lambda_j)>0$ for each $j$ and (\ref{psiinf}) is true for $\psi_0$ as well, applying Theorem \ref{density-c} we get that $\Phi_{\Lambda_j}(\Z)$ is dense in $(c_{\psi_0}(\Z), \|\cdot\|_{\psi_0}).$ So for any $0<\epsilon<1,$ we get $E^j=(e^j_m)_{m\in\Z}\in \Phi_{\Lambda_j}(\Z)$  such that
		\begin{equation*}
			\|E^j - B^j\|_{\psi_0} = \sup_{m \in \Z} \frac{|b^j_{m}- e^j_{m}|}{e^{\psi_0(|m|)}}<\epsilon, \quad \txt{ for each } 1\leq j\leq n. 
		\end{equation*}
		By triangle inequality we have,
		$$ \|E^j\|_{\psi_0} < 1+\|B^j\|_{\psi_0}, \quad \txt{ for each } 1\leq j \leq n . $$ 
		\textcolor{black}{Now we define $E=(e_k)_{k \in \Z^n} \in \Phi_{\Lambda}(\Z^n)$ by}
		\beas e_k &=& e^1_{k_1}\cdots e^n_{k_n},\quad \txt{ for }  k=(k_1, \cdots , k_n)\in \Z^n, \\  
		\txt{ and } \quad \quad \quad \quad  e^0_{m} &=& e^{\psi_0(|m|)}=b^{n+1}_{m},\quad \txt{ for } m \in \Z.\eeas 

  Since $\psi = n \psi_0$ is increasing, for $k=(k_1,\cdots , k_n)\in \Z^n$ we get that
		\begin{eqnarray*}
			\frac{|b_k- e_k|}{e^{\psi(|k|)}} &\leq& \frac{|b^1_{k_1}\cdots b^n_{k_n}- e^1_{k_1}\cdots e^n_{k_n}|}{e^{\psi_0(|k_1|)}\cdots e^{\psi_0(|k_n|)}}\\
			&\leq & \sum_{j=1}^n\dfrac{|b^j_{k_j}-e^j_{k_j}|}{e^{\psi_0(|k_j|)}}\left( \prod_{p=0}^{j-1}\dfrac{|e^p_{k_p}|}{e^{\psi_0(|k_p|)}} \prod_{p=j+1}^{n+1}\dfrac{|b^p_{k_p}|}{e^{\psi_0(|k_p|)}} \right) \\ 
			&< & \epsilon ~ n \prod_{j=1}^n (1+\|B^j\|_{\psi_0}) \\ 
			&\leq & C\epsilon.
		\end{eqnarray*}
		So we get that $\|B-E\|_\psi < C \epsilon.$ This proves the first part.
		
		For the converse part, let us assume that $\psi:[0,\infty) \rightarrow [0,\infty)$ is an increasing function satisfying 
		\bes \sum_{k=0}^\infty\dfrac{\psi(k)}{1+k^2} < \infty. \ees 
		Since $\psi$ is increasing, it is easy to see that \bes \int_0^{\infty}\dfrac{\psi(x)}{1+x^2}dx<\infty. \ees 
		Now, we can choose $\epsilon>0$ and $a \in \T^n$ in such a way that $B(0,\epsilon)$ and $B(a,2\epsilon)$ are disjoint subsets inside $\T^n.$
		By Theorem 2.6 (b) of \cite{BRS}, we get a non-zero radial function $g\in C_c^\infty(\R^n)$ supported inside $B(0,\epsilon)$  satisfying 
		$$|\widehat{g}(\xi)|\leq Ce^{-\psi(|\xi|)}, \quad \text{ for all } \xi \in \R^n.$$ 
		We define  
		$$h(x)= (2 \pi)^n \sum_{m\in \Z^n}g(x+ 2 \pi m), \quad \txt{for } x\in \T^n.$$ 
		We note that the choice of $\epsilon$ implies that $\ds h=g|_{\T^n}$ and $h\in C^\infty(\T^n).$ Moreover, by Poisson summation formula, we have
		$$|\widehat{h}(m)|=|\widehat{g}(m)|\leq Ce^{-\psi(|m|)}, \quad \txt{ for all } m\in \Z^n.$$
		Now, let us define $f=h*h \in C^\infty(\T^n)$ which is non-zero (since $h$ is non-zero) and vanishes on $B(a,\epsilon)$ because $h$ vanishes on $B(a,2\epsilon).$ So we get that 
		$$ |\widehat{f}(m)|\leq Ce^{-\psi(|m|)}|\widehat{h}(m)|, \quad \txt{ for all } m\in \Z^n.$$
		Since $h\in C^\infty(\T^n),$ we obtain $\widehat{h} \in l^1(\Z^n)$ which implies that 
		$$\ds{\sum_{m\in \Z^n}|\widehat{f}(m)|e^{\psi(|m|)}<\infty.}$$
		Therefore by Lemma \ref{dual-lem-c}, we get that $\widehat{f}\in (C_\psi(\Z^n),\|\cdot\|_\psi)^*.$ 
		Moreover, since $f \in C^\infty(\T^n)$ vanishes on the set $\Lambda=B (a,\epsilon),$ we have $\widehat{f}$ vanishes on $\Phi_\Lambda(\Z^n).$ Since $f$ is non-zero, we can conclude that the space $\Phi_\Lambda(\Z^n)$ is not dense in $(C_\psi(\Z^n),\|\cdot\|_\psi)$ for some
		set $\Lambda,$ \textcolor{black}{which contains a product of one dimensional sets of positive Lebesgue measure.}
	\end{proof}

	\subsection{Beurling's Theorem}
	
	We will prove Beurling's result on the torus $\T^n.$ For this, we will need the following lemma regarding a set \textcolor{black}{which is a product of one dimensional sets of positive Lebesgue measure}.
	\begin{Lem}\label{positive RT}
		If $\Lambda\subset \T^n$ is \textcolor{black}{ a product of one dimensional sets of positive Lebesgue measure} and $E\subset \T^n$ is a set of zero Lebesgue measure, then $\overline{\Lambda \setminus E}$ \textcolor{black}{contains a product of one dimensional sets of positive Lebesgue measure}. 
	\end{Lem} 
	\begin{proof}
		Since $\Lambda\subset \T^n$ is \textcolor{black}{ a product of one dimensional sets of positive Lebesgue measure}, there exists $\Lambda_j\subset \T$ such that $m(\Lambda_j)>0,$ for each $1\leq j \leq n,$ and $\Lambda_1\times \cdots \times \Lambda_n \textcolor{black}{=}\Lambda.$ By Lebesgue density theorem, we get that
		\bes \lim_{r \ra 0} \frac{m\left( B(\lambda_j,r)\cap\Lambda_j \right)}{m( B(\lambda_j,r))} = 1, \quad \txt{ for almost every } \lambda_j\in\Lambda_j. 
		\ees	   
		It follows that there is a set $\Gamma_j \subset \Lambda_j$ satisfying $m(\Gamma_j)=m(\Lambda_j)$ such that for all $\lambda_j\in \Gamma_j,$ we have	 
		$$m\left( B(\lambda_j,r)\cap\Lambda_j \right)>0, \quad \txt{ for all }r>0.$$ 
		In order to prove the lemma, it is enough to show that $\Gamma_1\times \cdots \times \Gamma_n\subseteq \overline{\Lambda \setminus E}.$ For any $\gamma = (\gamma_1, \cdots , \gamma_n)\in \Gamma_1\times \cdots \times \Gamma_n$ and $r>0,$ we note that
		\beas B(\gamma,r)\cap (\Lambda \setminus E) &\supseteq& \Big( B\big(\gamma_1, n^{-\frac{1}{2}} r \big)\times \cdots \times B\big(\gamma_n,n^{-\frac{1}{2}} r \big) \Big) \cap (\Lambda \setminus E)\\
		&\supseteq& \Big[ \Big(B\big(\gamma_1,n^{-\frac{1}{2}} r\big)\cap \Lambda_1 \Big) \times \cdots \times \Big( B \big(\gamma_n, n^{-\frac{1}{2}} r \big) \cap\Lambda_n \Big)\Big] \setminus E,
		\eeas
		which is a set of positive Lebesgue measure in $\R^n.$ So it follows that any neighbourhood of $\gamma$ intersects $\Lambda \setminus E.$ Hence we can conclude that $\Gamma_1\times \cdots \times \Gamma_n \subseteq \overline{\Lambda \setminus E}.$ 
	\end{proof}
	
	\begin{rem}
		If $\Lambda\subset \T^n$ is \textcolor{black}{ a product of one dimensional sets of positive Lebesgue measure} and $E\subset \T^n$ is a set of zero Lebesgue measure, then $\Lambda \setminus E$ may not \textcolor{black}{contain a product of one dimensional sets of positive Lebesgue measure}. For example, if we consider the set 
		$$E=\lbrace (x,y)\in [0,1]\times [0,1] : x-y \in \mathbb{Q} \rbrace$$ 
		of zero Lebesgue measure inside the set $\Lambda = [0,1]\times [0,1],$ \textcolor{black}{ which is a product of one dimensional sets of positive Lebesgue measure,} then $\Lambda \setminus E$ \textcolor{black}{does not contain a product of one dimensional sets of positive Lebesgue measure} in $\T^2.$
	\end{rem}
	
	Now, we will use Theorem \ref{sev-den-c} to prove Beurling's theorem on $\T^n.$ We note here that the original result on the unit circle $\T$ (\textcolor{black}{Lemma} \ref{psi-ber-c}) was for functions in $L^2(\T).$ However, we have been able to improve the following result for functions in $L^1(\T^n).$
	
	\begin{Thm}\label{sev-fn-c}
		Let $\psi:[0,\infty)\to [0,\infty)$ be an increasing function such that $\psi(x) \ra \infty$ as $x \ra \infty$ and 
		\begin{equation*}
			S = \sum_{k=0}^\infty\dfrac{\psi(k)}{1+k^2}.
		\end{equation*}
		\begin{itemize}
			\item[(a)] \textcolor{black}{If $S = \infty$ and $f\in L^1(\T^n)$ is satisfying
   \begin{equation}\label{eq;10c}
				\sum_{k\in\Z^n}|\widehat{f}(k)|e^{\psi(|k|)}<\infty 
			\end{equation}
			 such that $f$ vanishes on a set $\Lambda =\Lambda_1\times\cdots\times\Lambda_n$ , where each $\Lambda_j$ is a set of positive Lebesgue measure in $\T,$} then $f$ is zero almost everywhere.
			\item[(b)] If \textcolor{black}{$S<\infty$,} then there is a non-zero $f\in C^\infty(\T^n)$ vanishing on \textcolor{black}{ a product of one dimensional sets of positive Lebesgue measure and} satisfying (\ref{eq;10c}).
		\end{itemize}
	\end{Thm}
	
	\begin{proof}
		First we shall prove (a). Since $f\in L^1(\T^n)$ and $(\widehat{f}(k))_{k\in\Z^n} \in l^1(\Z^n)$ (follows from (\ref{eq;10c})), the Fourier series of $f$ converges absolutely and uniformly to a continuous function $g$ on $\T,$ that is, 
		\be \label{fourierseries} g(\lambda)=\sum_{k\in \Z^n} \widehat{f}(k)e^{ik\cdot \lambda}, \quad \txt{ for all } \lambda\in \T^n. \ee
		Moreover, $f=g$ almost everywhere. So we get that 
		$$E=\{x\in \T^n:f(x)\neq g(x)\}$$ 
		is a set of zero Lebesgue measure and $g$ vanishes on $\Lambda\setminus E.$ Since $g$ is continuous, $g$ vanishes on $F=\overline{\Lambda\setminus E},$ which \textcolor{black}{contains a product of one dimensional sets of positive Lebesgue measure} by Lemma \ref{positive RT}. Now we will prove that $\widehat{f}(k)=0 \txt{ for all } k\in \Z^n.$ Since $(\widehat{f}(k))_{k\in\Z^n}$ satisfies (\ref{eq;10c}), from Lemma \ref{dual-lem-c} we get that $(\widehat{f}(k))_{k\in \Z^n}\in (c_{\psi}(\Z^n),\|\cdot \|_{\psi})^*,$ that is, there is a bounded linear functional $T_f$ on $c_{\psi}(\Z^n)$ given by
		\be \label{functional} T_f(A) = \sum_{k \in \Z^n} \widehat{f}(k) ~ a_k, \quad \txt{ for } A = (a_k)_{k \in \Z^n} \in c_{\psi}(\Z^n). \ee
		Now, as $g$ vanishes on $F,$ from (\ref{fourierseries}) and (\ref{functional}) we get that $T_f$ vanishes on the set $\Phi_{F}(\Z^n).$ Since $F$ \textcolor{black}{contains a product of one dimensional sets of positive Lebesgue measure} and $S=\infty$, we get from Theorem \ref{sev-den-c} that $\Phi_{F}(\Z^n)$ is dense in $(c_{\psi}(\Z^n),\|\cdot \|_{\psi}).$ So it follows that $T_f$ is identically zero. Hence we conclude that $\widehat{f}(k)=0 \txt{ for all } k\in \Z^n.$ This proves (a). 
		
		For part (b), if $S<\infty,$ the function $f$ constructed in the proof of converse part of Theorem \ref{sev-den-c} serves our purpose. This completes the proof.
	\end{proof}
	\begin{rem}
		There are sets of positive Lebesgue measure in $\T^n$ which do not differ from \textcolor{black}{ a product of one dimensional sets of positive Lebesgue measure} by a set of measure zero. For example, if we consider the positive measure set $E = \{(x,y)\in [0,1] \times [0,1]: x-y\in F\},$ where $F\subset [0,1]$ is the fat Cantor set (defined before), we can not obtain \textcolor{black}{ a product of one dimensional sets of positive Lebesgue measure} even by adding/subtracting a measure zero set to $E.$ 
	\end{rem}
	
	\noindent \textbf{Acknowledgement:} We wish to thank Professor Swagato K. Ray for several fruitful discussions and suggestions.

\end{document}